\newtheorem{theorem}{Theorem}[section]
\newtheorem*{theorem*}{Theorem 1.3}
\newtheorem{lemma}[theorem]{Lemma}
\newtheorem{corollary}[theorem]{Corollary}
\newtheorem{proposition}[theorem]{Proposition}
\theoremstyle{definition}
\newtheorem{definition}[theorem]{Definition}
\theoremstyle{remark}
\newtheorem{remark}[theorem]{Remark}
\theoremstyle{question}
\newtheorem{question}[]{Question}
\numberwithin{equation}{section}
\newcommand{\Rm}{\mathbb{R}^m}
\newcommand{\tg}{\tilde{g}}
\newcommand{\tu}{\tilde{u}}
\newcommand{\tx}{\tilde{x}}
\newcommand{\tf}{\tilde{f}}
\newcommand{\tfrak}{\mathfrak{t}}
\newcommand{\tfraks}{\mathfrak{t}^*}
\newcommand{\polytope}{{\Delta}}
\newcommand{\polytopepk}{\polytope_{p,k}}
\newcommand{\tphi}{\tilde{\phi}}
\newcommand{\tL}{\tilde{\L}}
\newcommand{\tLk}{\tilde{\L}_k}
\renewcommand{\L}{\textbf{L}}
\newcommand{\Lpk}{\L_{p,k}}
\newcommand{\tLL}{\tilde{L}}
\newcommand{\tomega}{\tilde{\omega}}
\newcommand{\tdelta}{\tilde{\delta}}
\newcommand{\tpolytope}{\tilde{\polytope}}
\newcommand{\tpolytopepk}{\tilde{\polytope}_{p,k}}
\newcommand{\T}{T}
\newcommand{\lattice}{\Lambda}
\renewcommand{\tt}{\textbf{t}}
\newcommand{\tJ}{\tilde{J}}
\newcommand{\tF}{\tilde{F}}
\newcommand{\fext}{\zeta_{(\polytope,\L,f)}}
\newcommand{\fextpmk}{\zeta_{(\polytopepk,\L,f^\pm_{p,k})}}
\newcommand{\fpmb}{f^\pm_b}
\newcommand{\fpmbm}{f^\pm_{-b}}
\newcommand{\wext}{\zeta_{(\polytope,\L,f,w)}}
\newcommand{\fDonFutinv}{\mathcal{F}_{\polytope,\L,f}}
\newcommand{\wfDonFutinv}{\mathcal{F}_{\polytope,\L,f,m+2}}
\newcommand{\tDonFutinv}{\mathcal{F}_{\tpolytope,\tL}}
\newcommand{\fwDonFutinv}{\mathcal{F}_{\polytope,\L,f,w}}
\newcommand{\ext}{\zeta_{(\polytope, \L)}}
\newcommand{\extt}{\zeta_{(\tpolytope, \tL)}}
\newcommand{\exttk}{\zeta_{(\tpolytopepk, \tLk)}}
\newcommand{\ckemconstant}{c_{(\polytope,\L,f)}}
\newcommand{\symppot}{\mathcal{S}(\polytope,\L)}
\newcommand{\tsymppot}{\mathcal{S}(\tpolytope,\tilde{\L})}
\newcommand{\tsigma}{\tilde{\sigma}}
\newcommand{\G}{\mathbb{T}}
\newcommand{\g}{\mathfrak{t}}
\newcommand{\GG}{\mathbb{G}}
\newcommand{\tGG}{\tilde{\mathbb{G}}}
\newcommand{\tpartial}{\tilde{\partial}}
\newcommand{\ta}{\tilde{a}}
\newcommand{\ttt}{\tilde{\tt}}
\renewcommand{\tJ}{\tilde{J}}
\newcommand{\tHH}{\tilde{\mathbb{H}}}
\newcommand{\HH}{\mathbb{H}}
\newcommand{\fpmpk}{f^\pm_{p,k}}
\newcommand{\fpmpun}{f^\pm_{p,1}}
\newcommand{\CP}{\mathbb{CP}}
\newcommand{\R}{\mathbb{R}}
\newcommand{\F}{\mathbb{F}}
\renewcommand{\P}{\mathbb{P}}
\begin{document}

\title{Conformally K\"ahler, Einstein-Maxwell metrics on Hirzebruch Surfaces}

\author{ISAQUE VIZA DE SOUZA}
\address{Département de Mathématiques, Université du Québec à Montréal}

\email{viza\underline{\hspace{.05in}}de\underline{\hspace{.05in}}souza.isaque@courrier.uqam.ca}






 \begin{abstract}
In this note we prove that a special family of Killing potentials on certain Hirzebruch complex surfaces, found by Futaki and Ono \cite{futaki2018volume}, gives rise to new conformally K\"ahler, Einstein-Maxwell metrics. The correspondent K\"ahler metrics are ambitoric \cite{Apostolov2016, Apostolov2019conformally} but they are not given by the Calabi ansatz \cite{lebrun2016einstein}. This answers in positive questions raised in \cite{futaki2018volume, futaki2019conformally}.
 \end{abstract}

\maketitle


\section{Introduction}

In this paper, we study the existence of conformally K\"ahler,  Einstein-Maxwell metrics on compact Hirzebruch complex surfaces. 

\begin{definition} \label{ckemdef}
A \emph{conformally K\"ahler,  Einstein-Maxwell} (cKEM for short) (real) $4$-dimensional manifold $(M, J, \tg)$ is a compact complex K\"ahler manifold $(M,J)$ with a Hermitian metric $\tg$ for which there exists a function $f$ such that $g=f^2\tg$ is a K\"ahler metric, satisfying also the following curvature conditions:
\begin{center}
 \begin{itemize}
     \item[(i) ] $Ric^{\tg}(J\cdot,J\cdot)=Ric^{\tg}(\cdot,\cdot)$;
     \item[(ii)] $Scal(\tg)=const$;
 \end{itemize}
\end{center}
 where $Ric^{\tg}$ and $Scal(\tg)$ denote the Ricci tensor and the scalar curvature of $\tg$.
\end{definition}
We shall refer to such Hermitian metrics as \emph{cKEM} metrics on $(M,J)$. When $M$ is a (real) 4-dimensional manifold, a cKEM metric provides a Riemannian signature analogue of a solution to the Einstein-Maxwell equations studied in General Relativity (see \cite{Apostolov2016, debever1984exhaustive, lebrun2016einstein, plebanski1976rotating}).

This  class of  Hermitian metrics on $4$-manifolds was first introduced by C.~LeBrun \cite{lebrun2008einstein}, who observed that they extend naturally the more familiar classes of K\"ahler metrics of constant scalar curvature (cscK for short) much studied since the pioneering work of E. Calabi \cite{calabi1982extremal, calabi1985extremal}, as well as the Einstein-Hermitian 4-manifolds classified in the compact case by LeBrun \cite{lebrun2012einstein}. The theory of cKEM metrics was consequently extended to arbitrary dimension by Apostolov-Maschler \cite{Apostolov2019conformally} who have also formulated the existence problem for such metrics  on a compact K\"ahler manifold in the framework of Calabi’s original approach of finding distinguished representatives for K\"ahler metrics in a given de Rham class. The point of view of \cite{Apostolov2019conformally} was  generalized by A.~Lahdili \cite{lahdili2019kahler} who showed that the K\"ahler metrics giving rise to cKEM Hermitian structures arise as a special case of a more general notion of \emph{weighted constant scalar curvature K\"ahler} metrics to which a great deal of the known machinery in the cscK case can be effectively applied. Finally, additional motivation for studying conformally K\"ahler Einstein-Maxwell 4-manifolds came from the recent realization by Apostolov-Calderbank \cite{apostolov_cr_2020} that such metrics give rise to extremal Sasaki structures on $5$-manifolds \cite{Boyer2009sasaki}.

With the above motivation in mind, the existence theory for cKEM metrics is rapidly taking shape. Families of non-trivial examples were constructed on $\F_0=\CP^1 \times \CP^1$ \cite{lebrun2015einstein} and on the Hirzebruch complex surfaces $\F_k= \P(\mathcal{O} \oplus \mathcal{O}(k)) \to \CP^1$ $,k>0$, \cite{lebrun2016einstein} by C.~LeBrun.  An extension of these constructions to other ruled complex surfaces appears in \cite{koca2016strongly}. LeBrun's examples on $\F_k$ have large groups of automorphisms; actually they are of cohomogeniety one under the action of suitable compact groups. It was shown in \cite{lahdili2019automorphisms, futaki2019conformally} that any K\"ahler metric on $\F_k$ which is conformal to an Einstein-Maxwell Hermitian metric must be invariant under the action of a $2$-dimensional torus, i.e. it is toric. Toric cKEM metrics have been studied more generally in \cite{Apostolov2019conformally} and as a consequence of this work it was realized that the existence of a K\"ahler metric conformal to an Einstein-Maxwell Hermitian one in a given K\"ahler class on $\F_k$  can be characterized in terms of the corresponding Delzant image (which is a Delzant trapezoid $\polytope \subset \R^2$)  as follows:  

\begin{itemize}\label{abcondition}
\item[(a)]  there exists an affine linear function $f$ on $\R^2$ which is positive on $\polytope$ and satisfies a non-linear algebraic condition, and 
\item[(b)] a certain linear functional depending on $f$ is strictly positive on convex piecewise affine linear functions over $\polytope$ which are not affine linear.  
\end{itemize}

The condition (a) is characterized in \cite{Apostolov2019conformally} as the vanishing of a Futaki-like invariant  on $M$ whereas the condition (b) is referred there as \emph{$f$-$K$-stability} of the pair $(\polytope,f)$.  It is shown in \cite{Apostolov2019conformally, futaki2018volume} that on $\F_0$, (a) holds only for the affine linear functions associated to the  explicit solutions found in \cite{lebrun2015einstein}, thus leading to a complete classification of cKEM metrics on $\F_0$.  Furthermore, \cite{futaki2018volume} simplifies the search for solutions of (a) by interpreting them  as critical points of a volume functional. In particular, \cite{futaki2018volume} essentially identifies all solutions of (a) on the first Hirzebruch surface $\F_1$. Their analysis reveals that certain K\"ahler classes on $\F_1$ admit two additional positive affine linear functions  $f^+$ and $f^-$ satisfying (a), which do not correspond to the solutions  found in \cite{lebrun2015einstein}. However, even though \cite{futaki2020existence} provides numerical evidence that the condition (b) for those solutions $f^+$ and $f^-$ of (a) holds true, the question of whether or not $f^\pm$ do actually correspond to (new) cKEM metrics on $\F_1$ was left open. One of the purposes of this article is to give a positive answer to this question.

\begin{theorem} \label{thm1}
The first Hirzebruch surface  $\F_1$ admits conformally Einstein-Maxwell, toric Kähler metrics which are regular ambitoric of hyperbolic type in the sense of \cite{Apostolov2016}. These, together with the metrics of Calabi type  constructed by LeBrun in \cite{lebrun2016einstein} are the only conformally Einstein-Maxwell K\"ahler metrics on $\F_1$, up to a holomorphic homothety.
\end{theorem}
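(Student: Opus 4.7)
The plan is to combine the toric reduction established in \cite{lahdili2019automorphisms, futaki2019conformally} with the ambitoric classification of \cite{Apostolov2016}. By the cited results, any K\"ahler metric on $\F_1$ conformal to an Einstein-Maxwell Hermitian one is invariant under the maximal torus action, so it is toric and hence its momentum image is a Delzant trapezoid $\polytope \subset \R^2$ carrying an affine linear $f>0$ satisfying conditions (a) and (b) of the introduction. The problem therefore becomes the combined classification of pairs $(\polytope,f)$ satisfying (a)-(b) and the explicit construction of the underlying metric.

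For step one I would classify the solutions of (a). This has essentially been carried out by Futaki-Ono in \cite{futaki2018volume}: in every K\"ahler class on $\F_1$, the affine linear $f>0$ obeying (a) split into the Calabi-type family, whose level sets are constant on the $\CP^1$-fibers of $\F_1 \to \CP^1$ and which correspond to LeBrun's explicit solutions from \cite{lebrun2016einstein}, and two additional roots $f^\pm$ that do not descend to the base. Thus the remaining question is to produce and characterize the cKEM metrics associated with $f^\pm$.

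The key step is the construction. Since the Delzant trapezoid of $\F_1$ has two parallel facets and two converging facets, it fits the regular ambitoric framework of \cite{Apostolov2016}: the toric K\"ahler metrics adapted to this structure are parametrized by two one-variable functions $A(x)$ and $B(y)$ with prescribed boundary behaviour at the facets. Substituting the ambitoric ansatz into the weighted cscK formulation of the cKEM equation given in \cite{Apostolov2019conformally, lahdili2019kahler} (with weight a power of $f$), the PDE decouples into two ODEs forcing $A$ and $B$ to be polynomials of fixed degree whose coefficients are determined, up to normalization, by $f^\pm$ and the side lengths of $\polytope$. I would then verify positivity of $A$ and $B$ on the relevant intervals and the correct vanishing at the boundary, yielding a smooth cKEM metric. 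The hyperbolic (rather than elliptic or parabolic) character should fall out of the discriminant of the quadratic canonically associated with $f^\pm$ being positive, which is precisely the regime excluded by the Calabi ansatz and therefore explains why these solutions are genuinely new.

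The main obstacle is the verification of condition (b) for $f^\pm$, for which only numerical evidence appeared in \cite{futaki2020existence}. My approach sidesteps this positivity problem by constructing the smooth cKEM metric directly from the ambitoric ODEs; once the metric exists, the $f$-$K$-stability of $(\polytope,f^\pm)$ follows \emph{a posteriori} from the general theory of \cite{Apostolov2019conformally}. The uniqueness statement then combines Futaki-Ono's completeness of the classification of (a) with uniqueness of the cKEM K\"ahler representative in a given polytope datum (a consequence of the uniqueness of weighted cscK metrics modulo reduced automorphisms proved in \cite{lahdili2019kahler}), yielding that the ambitoric hyperbolic family together with LeBrun's Calabi-type metrics exhaust all cKEM metrics on $\F_1$ up to holomorphic homothety.
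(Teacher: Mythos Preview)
Your existence argument is a plausible alternative to the paper's, but it differs in a substantive way. You propose to plug the ambitoric ansatz directly into the weighted Abreu equation on the original trapezoid, obtain polynomials $A$, $B$, and then ``verify positivity of $A$ and $B$ on the relevant intervals''. That last step is the whole difficulty, and you do not carry it out. The paper avoids a case-by-case positivity check by first applying the $f$-twist of \cite{apostolov_cr_2020} to $(\polytope_{p,1},\L_{p,1},f^\pm)$, which converts $(f,4)$-$K$-stability into ordinary $K$-stability of a new labelled quadrilateral $(\tpolytope,\tL)$. The key computation is that $(\tpolytope,\tL)$ is \emph{equipoised} (a one-line algebraic identity on the vertices), and for equipoised quadrilaterals Legendre's theory together with \cite[Example~1]{Apostolov2015} shows that the extremal polynomials are \emph{automatically} positive. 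So the paper replaces your unproven positivity step by a structural observation. Your route could in principle be made to work, but as written it asserts exactly the point that needed proof.

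There is a genuine gap in your uniqueness argument. You write that Futaki--Ono ``essentially'' classified all positive affine $f$ on $\polytope_{p,1}$ satisfying condition~(a), splitting them into the Calabi-type family and the two extra roots $f^\pm$. That is not quite what \cite{futaki2018volume} proves: in addition to these, they produce a further family $f^\pm_b$ (depending on a real parameter $b$) which formally solves condition~(a), and they explicitly leave open whether any $f^\pm_b$ is positive on $\polytope_{p,1}$. The paper closes this gap with a separate lemma showing, by direct inequalities on the vertex values, that $f^\pm_b$ is never positive on $\polytope_{p,1}$ for any admissible $(p,b)$. Without ruling out this family your uniqueness statement does not follow, so this is a missing ingredient rather than a stylistic difference.
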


We note that in \cite{futaki2018volume}, it is shown that similar solutions  $f_k^+$ and $f_k^-$ of the condition (a) also arise on any Hirzebruch surface  $\F_k$, $2\leq k\leq 4$, but it is unknown if these, together with the affine linear functions, corresponding to the solutions in \cite{lebrun2015einstein} are the only solutions. Our method of proof also yields

\begin{theorem} \label{thm2}
Each Hirzebruch surface $\F_k$, with $k=1,2,3,4$, admits  conformally Einstein-Maxwell, toric K\"ahler metrics which are regular ambitoric of hyperbolic type.
\end{theorem}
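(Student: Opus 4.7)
The plan is to extend to $k=1,2,3,4$ the method underlying the proof of Theorem \ref{thm1}. By the Apostolov--Maschler framework summarised in the introduction, producing a cKEM K\"ahler metric on $\F_k$ amounts to exhibiting a pair $(\polytopepk, f)$, where $\polytopepk\subset\R^2$ is a Delzant trapezoid realising a K\"ahler class on $\F_k$ and $f$ is an affine linear function on $\R^2$ which is positive on $\polytopepk$ and satisfies conditions (a) and (b). For each $k\in\{1,2,3,4\}$, Futaki and Ono \cite{futaki2018volume} supply explicit non-Calabi candidates $\fpmpk$ fulfilling (a); the task reduces to verifying (b) for these candidates and identifying the associated K\"ahler metric as regular ambitoric of hyperbolic type.

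First, I would place $(\polytopepk,\fpmpk)$ inside the regular ambitoric ansatz of Apostolov--Calderbank--Gauduchon \cite{Apostolov2016}. In the hyperbolic case, such a K\"ahler metric is encoded by two one-variable functions $A(x)$ and $B(y)$ on disjoint intervals, and for a compact toric surface these must be polynomials of degree at most four with prescribed simple zeros at the endpoints determined by the edge labels of the trapezoid. The cKEM equation with conformal factor $\fpmpk$ then reduces to a finite system of algebraic constraints on the coefficients of $A$ and $B$, expressible in terms of the Futaki--Ono parameters defining $\fpmpk$. I would solve these constraints and exhibit $A(x)$, $B(y)$ explicitly.

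The main obstacle is positivity: one must verify that the constructed polynomials $A$ and $B$ remain strictly positive on the interior of their respective intervals, which is the analytic content of the $f$-$K$-stability condition (b) in the ambitoric setting. This boils down to a root analysis of two quartics whose coefficients depend algebraically on $k$ and on the parameters produced by the Futaki--Ono volume functional. I expect this to be tractable case-by-case for $k=1,2,3,4$ through discriminant computations combined with the monotonicity properties of the volume functional of \cite{futaki2018volume} used to produce $\fpmpk$, ruling out interior zeros uniformly over the relevant range of K\"ahler classes.

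Once positivity is established, the construction delivers a regular hyperbolic ambitoric K\"ahler metric $g$ on $\F_k$ such that $\tg=\fpmpk^{-2}g$ is cKEM, and the ambitoric structure is by construction of hyperbolic (rather than elliptic or parabolic) type. A comparison of isometry groups then shows these metrics are genuinely distinct from the cohomogeneity-one Calabi-type examples of \cite{lebrun2016einstein}, completing the proof of Theorem \ref{thm2}.
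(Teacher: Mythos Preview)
Your outline correctly identifies the reduction to conditions (a) and (b) and the role of the ambitoric ansatz, but it diverges from the paper at the crucial step and leaves the hard part unproven. You propose to write down the quartics $A$, $B$ directly on $(\polytopepk,\Lpk,\fpmpk)$ and then check their interior positivity ``case-by-case through discriminant computations''; this is exactly the obstacle the paper circumvents, and your proposal does not actually carry it out---you only \emph{expect} it to be tractable. The paper instead passes through the $f$-twist transform of \cite{apostolov_cr_2020}: by Proposition~\ref{keyremark} and Corollary~\ref{keycorollary}, $(f,4)$-$K$-stability of $(\polytopepk,\Lpk)$ is equivalent to ordinary $K$-stability of the twisted labelled quadrilateral $(\tpolytopepk,\tLk)$. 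The key observation is then that, because $\fextpmk$ is constant, the extremal affine function of the twist is $\exttk = c/\fpmpk$, so $(\tpolytopepk,\tLk)$ is \emph{equipoised} in the sense of Legendre if and only if
\[
\sum_{i=1}^4 (-1)^i \frac{1}{\fpmpk(s_i)} = 0,
\]
a single algebraic identity that is verified directly from the Futaki--Ono formulae. Once this holds, Theorem~\ref{equipoisedresult} (drawing on \cite{legendre2011toric} and \cite[Example~1]{Apostolov2015}) guarantees $K$-stability automatically: for an equipoised quadrilateral the extremal pair $(A,B)$ satisfies $\deg(A+B)\leq 1$, and the boundary conditions then force positivity without any discriminant analysis. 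Thus the paper replaces your open-ended root analysis by a clean algebraic check plus an appeal to a general structural result; your direct route could in principle succeed, but as written it stops precisely where the real work begins.
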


We now briefly explain the main idea of the proof of the above results. The essential observation \cite{apostolov_cr_2020} is that if $f$ is a positive affine linear function over a Delzant polytope $(\polytope,\L)$ in $\R^n$, one can  associate to $(\polytope,\L,f)$ a different labelled compact convex simple polytope $(\tpolytope, \tL)$ in $\R^n$, called the $f$-twist trasform of $(\polytope,\L)$. Following the theory in \cite{apostolov_cr_2020}, we observe in Proposition \ref{keyremark} below that $(\polytope,\L)$ is $f$-$K$-stable  (i.e. (b) holds with respect to  $f$) if and only if  $(\tpolytope, \tL)$ is (relatively) $K$-stable in the sense originally introduced by Donaldson \cite{donaldson2002scalar} in the cscK case and by \cite{zhou2008stability} in general. Thus, proving Theorems \ref{thm1} and \ref{thm2} above reduces (via \cite[Theorem 5]{Apostolov2019conformally}) to checking that the  corresponding $f^\pm_{k}$-twists $(\tpolytope_{k}^\pm, \tL_{k}^\pm)$ of the Delzant trapezoids associated to $\F_k$  are $K$-stable (see Theorem \ref{equivthm} for a precise statement). Our key new observation here is that $(\tpolytope_{k}^\pm, \tL_{k}^\pm)$ are in fact non-trapezoidal equipoised labelled quadrilaterals in the sense of E. Legendre \cite{legendre2011toric}. It then follows from the latter work that the $K$-stability of $(\tpolytope_{k}^\pm, \tL_{k}^\pm)$ can be reduced to checking the positivity of two polynomials of degree $\leq 4$ over the given intervals. This is shown to hold for any non-trapezoidal equipoised quadrilateral in \cite[Example 1]{apostolov_cr_2020}, thus concluding the proof of the existence. The uniqueness in Theorem \ref{thm1} follows from the fact that any cKEM metric must be invariant under the action of a maximal torus in the automorphism group of $\F_k$ \cite{futaki2020existence, lahdili2019kahler}, the uniqueness result for toric cKEM metrics established in \cite{Apostolov2019conformally}, and explicit computations in \cite{futaki2018volume}.

The paper is organized as follows. In Sections 2 and 3,  we fix the notations
and, following \cite{Apostolov2019conformally} and \cite{futaki2018volume},  introduce the Calabi type problem of the search of K\"ahler metrics conformally related to an Einstein--Maxwell hermitian metric  and, more generally, of weighted extremal K\"ahler metrics,  in a given coholomology class. In Section 4,  we specialize to the toric case, and, following \cite{Apostolov2019conformally}, express the problem in terms of the corresponding Delzant polytope. In Section 5,  we  recall  the notion of $f$-twist of a labelled polytope introduced in \cite{apostolov_cr_2020},  and use it to relate the existence of weighted extremal metrics and extremal metrics as well as the corresponding notions of stability of the corresponding polytopes. In Section 6,  we make use of the results of Section 5 together with the theory of ambitoric geometry \cite{Apostolov2015, legendre2011toric} to prove Theorems \ref{thm1} and \ref{thm2}.

\section*{Acknowledgements}

This paper is part of the author\textquotesingle s Ph.D. thesis. The author would like to thank his thesis supervisor Vestislav Apostolov for his invaluable advice and for sharing his insights with him. The author is also grateful to Eveline Legendre, who kindly shared with him her observations used in the proof of Theorem \ref{equipoisedresult}. He would also like to thank Abdellah Lahdili and Lars Martin Sektnan for enlightening discussions, the Federal University of Ouro Preto and Université du Québec à Montréal for their financial support, and the referee for his careful reading and suggestions that greatly improved the text.

\section{Conformally Kähler, Einstein-Maxwell Geometry}

We start by recalling some properties of cKEM metrics, according to C.~LeBrun \cite{lebrun2016einstein} and Apostolov-Calderbak-Gauduchon \cite{Apostolov2016}. Our notation will closely follow that of \cite{Apostolov2019conformally, futaki2018volume}.

 Let $\tg$ be a Hermitian metric on a compact complex K\"ahler manifold $(M,J)$ satisfying Definition \ref{ckemdef}. 

As the Ricci tensor $Ric^{g}$ of the K\"ahler metric $g=f^2\tg$ also satisfies $Ric^{\tg}(J\cdot,J\cdot)=Ric^{\tg}(\cdot,\cdot)$, and 
\begin{equation} \label{ricci}
    Ric^{\tg}= Ric^{g}+\frac{2m-2}{f}D^gdf+hg,
\end{equation} where $D^g$ denotes de Levi-Civita connection of $g$ and $h$ is a smooth function not given explicitly, the condition $(i)$ in Definition \ref{ckemdef} is equivalent to the condition that the vector field $K=Jgrad_gf$ is Killing for both $g$ and $\tg$. Furthermore, condition $(ii)$ in Definition \ref{ckemdef} reads as
\begin{equation} \label{scal}
    Scal(\tg)=f^2Scal(g)-2(2m-1)f\Delta_gf-2m(2m-1)|df|_g^2=c
\end{equation}
where $c$ is a constant, $\polytope_g$ is the Riemannian Laplacian of $g$ and $Scal(g)$ is the scalar curvature of $g$. We define  the function $$Scal_f(g):=f^2Scal(g)-2(2m-1)f\Delta_gf-2m(2m-1)|df|_g^2,$$ and refer to it as the $(f,2m)$-scalar curvature of $g$. This is a particular case (with $w=2m$) of the notion of  $(f,w)$-\emph{scalar curvature} $$Scal_{(f,w)}(\tg):=f^2Scal(g)-2(w-1)f\Delta_gf-w(w-1)|df|_g^2 $$studied in \cite{Apostolov2019conformally, lahdili2019automorphisms} for an arbitrary real number $w$.

Thus, every cKEM metric admits a Killing vector field $K:=Jgrad_{g}f$, and we know from \cite[Theorem 1]{lahdili2019automorphisms} and \cite[Theorem 2.1]{futaki2020existence} that every cKEM metric on a compact manifold is invariant under the action of a maximal compact real torus $\G$ inside the reduced automorphism group $ Aut_r(M,J)$ of $(M,J)$ with $K\in\g= \mathrm{Lie}(\G)$ (see \cite{gauduchoncalabi} for the definition of $Aut_r(M,J)$). More precisely:

\begin{theorem}[\cite{futaki2020existence, lahdili2019automorphisms}] \label{lahdili1} Let $(M,g,J)$ be a compact K\"ahler manifold and $K=Jgrad_gf$ a Killing vector field with positive Killing potential $f$. If $g$ is $f$-extremal (i.e. if $Scal_f(g)$ is a Killing potential) then $g$ is invariant under the action of a maximal compact real torus $\G\subset Aut_r(M,J)$ such that $K$ and $Jgrad_g(Scal_f(g))$ belong to $\mathrm{Lie}(\G)$.
\end{theorem}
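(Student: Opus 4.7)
The plan is to adapt Calabi's classical proof \cite{calabi1985extremal} that the identity component of the isometry group of an extremal K\"ahler metric is a maximal compact subgroup of $Aut_r(M,J)$, now with $Scal(g)$ replaced throughout by the $(f,w)$-weighted scalar curvature $Scal_f(g)$. The starting point is that, by hypothesis, both $K = Jgrad_gf$ and $Y := Jgrad_g(Scal_f(g))$ are real holomorphic Killing vector fields for $g$, so the closures of their flows generate a compact connected abelian subgroup of $Isom(g,J)\cap Aut_r(M,J)$ whose Lie algebra contains $K$ and $Y$. I would then enlarge this subgroup to a maximal torus $\mathbb{T}$ in $Isom(g,J)\cap Aut_r(M,J)$, which by construction still contains $K$ and $Y$ in its Lie algebra and acts on $M$ by $g$-isometries.

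The substantive step is to promote this to maximality inside $Aut_r(M,J)$, i.e. to prove that every real holomorphic vector field $Z\in \mathrm{Lie}(Aut_r(M,J))$ commuting with $\mathbb{T}$ already lies in $\mathrm{Lie}(\mathbb{T})$. For such a $\mathbb{T}$-invariant $Z$ one decomposes $Z = Z_K + Jgrad_g u$ with $Z_K$ Killing for $g$ and $u$ a smooth $\mathbb{T}$-invariant real function, reducing the problem to showing that $Jgrad_g u$ is Killing. This is where the $f$-extremal hypothesis enters: one pairs the resulting fourth-order equation for $u$ against $u$ itself in the weighted $L^2$ inner product $\langle u_1,u_2\rangle_{f,w} := \int_M u_1 u_2\, f^{w-1}\, dv_g$, and uses self-adjointness of the associated Lichnerowicz-type operator, together with the fact that $Y$ is Killing and central in the centralizer of $\mathbb{T}$, to integrate by parts and conclude that the symmetric part of $\nabla Jgrad_g u$ vanishes.

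The principal obstacle is thus establishing the weighted Bochner--Kodaira--Lichnerowicz identity and the self-adjointness of the corresponding fourth-order operator with respect to the $f^{w-1}$-weighted pairing; this is the technical heart of \cite{lahdili2019automorphisms, futaki2020existence} and replaces the classical Hodge-theoretic reductivity computation of Matsushima--Lichnerowicz used in the cscK/extremal case. Once this weighted reductivity is in place, the theorem follows by setting $\mathbb{T}$ as above, since $\mathbb{T}$ is then a maximal compact torus in $Aut_r(M,J)$ acting isometrically on $g$ and containing both $K$ and $Jgrad_g(Scal_f(g))$ in its Lie algebra.
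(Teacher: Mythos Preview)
The paper does not supply its own proof of this theorem: it is quoted verbatim from \cite{lahdili2019automorphisms} and \cite{futaki2020existence} and used as a black box, so there is no in-paper argument to compare your proposal against. Your outline is, however, an accurate high-level summary of the strategy carried out in those cited references: one replaces the classical Lichnerowicz operator by its $(f,w)$-weighted analogue, proves that it is self-adjoint with respect to the $f^{-w-1}$-weighted $L^2$ pairing (note the exponent: in \cite{Apostolov2019conformally, lahdili2019automorphisms} the relevant weight is $f^{-(w+1)}$, not $f^{w-1}$), and then runs a Matsushima--Lichnerowicz/Calabi-type reductivity argument to conclude that the identity component of the $g$-isometry group inside $Aut_r(M,J)$ is a maximal compact subgroup.

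One point in your sketch deserves care. The decomposition you write, $Z = Z_K + J\mathrm{grad}_g u$ with $Z_K$ Killing, is not a priori available for an arbitrary $\mathbb{T}$-invariant holomorphic vector field; what one actually uses is the Hodge-type decomposition of the complexified reduced Lie algebra into eigenspaces of the weighted Lichnerowicz operator (or, equivalently, the decomposition $Z^{1,0} = \mathrm{grad}^{1,0}_g(h_1 + i h_2)$ for real $h_1,h_2$), and the weighted identity then forces the ``non-Killing'' piece to vanish when $Scal_f(g)$ is itself a Killing potential. With that correction and the sign of the weight fixed, your plan matches the proofs in the cited sources.
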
 

\section{The Weighted Calabi Problem}

Now we fix a maximal compact torus $\G\subset Aut_r(M,J)$, and a vector field $K\in\mathfrak{\g}:=\mathrm{Lie}(\G)$. Let $\omega_0$ be a $\G$-invariant K\"ahler form, and $ \Omega=[ \omega_0]\in H^2_{DR}(M,\mathbb{R})$ be a fixed K\"ahler class. The problem we are going to study is to find a $\G$-invariant K\"ahler metric $g$ with K\"ahler form $\omega_g\in\Omega$, such that $\tg=f^{-2}g$ is a cKEM metric, for $f>0$ such that $Jgrad_gf=K$.

Denote by $\mathcal{K}^{\G}_{\Omega}$ the space of $\G$-invariant Kähler metrics $g$ on $(M,J)$ with $\omega_g\in\Omega$. Then the vector field $K\in\g$ is Hamiltonian with respect to $\omega_g$ (see \cite[Chapter 2]{gauduchoncalabi}), i.e. $$\iota_K\omega_g=-df_{K,g}$$ for a smooth function $f_{K,g}$ on $M$. Such a function is called a \textit{Killing potential} of $K$ with respect to $\omega_g$. We observe that this function is defined up to an additive constant, so we further fix the setting by requiring $$\int_Mf_{K, g}\frac{\omega}{m!}=a,$$ where $a$ is a fixed real constant. We shall denote by $f_{K,a,g}$ the unique function satisfying the above relations.

Since $min\left\{ f_{K,a,g}|x\in M\right\}$ is independent of $g$ in $\mathcal{K}^{\G}_{\Omega}$ (see e.g. \cite[Lemma 1]{Apostolov2019conformally}), following \cite{futaki2018volume}, we define:

\begin{equation} \label{pgomega}
    \mathcal{P}^{\G}_{\Omega}:=\left\{(K,a)\in\mathfrak{\g}\times\mathbb{R}|f_{K,a,g}>0 \right\},
\end{equation}
\begin{equation} \label{hgomega}
    \mathcal{H}^{\G}_{\Omega}:=\left\{\tg_{K,a}=\frac{1}{f^2_{K,a,g}}g \left| (K,a)\in\mathcal{P}^{\G}_{\Omega},\right.g\in\mathcal{K}^{\G}_{\Omega} \right\}.
\end{equation}

From now on we identify the K\"ahler metric $g$ with its Kähler form $\omega_g$, and we drop the subscript $g$. Fixing $(K,a)\in\mathcal{P}^{\G}_{\Omega}$, let
\begin{equation} 
\label{hgomegaka}
    \mathcal{H}^{\G}_{\Omega,K,a}:=\left\{\tg_{K,a}| g\in\mathcal{K}^{\G}_{\Omega} \right\}
\end{equation}
and

\begin{equation} \label{constfut1}
    c_{\Omega,K,a}:=\left(\int_M s_{\tilde{g}_{K,a}}\frac{1}{f_{K,a,g}^{2m+1}}\frac{\omega^m}{m!}\right)\Big/\left( \int_M \frac{1}{f_{K,a,g}^{2m+1}}\frac{\omega^m}{m!}\right).
\end{equation}
It follows from \cite[Corollary 1]{Apostolov2019conformally} that $c_{\Omega,K,a}$ is a constant independent of the choice of $g\in\mathcal{K}^{\G}_{\Omega}$.

Also, for each vector field $H\in\g$ with Killing potential $f_{H,b,g}$, we consider
\begin{equation}\label{futakickem}
    \mathfrak{F}^{\G}_{\Omega,K,a}(H):=\int_M\left(\frac{s_{\tilde{g}_{K,a}}-c_{\Omega,K,a}}{f_{K,a,g}^{2m+1}}\right)f_{H,b,g}\frac{\omega^m}{m!},
\end{equation}
which according to \cite[Corollary 1]{Apostolov2019conformally} is a linear functional, independent of the choice of $(g,b)\in\mathcal{K}^{\G}_{\Omega}\times\mathbb{R}$.

\begin{definition} \label{ckemFutinv}
The linear map $\mathfrak{F}^{\G}_{\Omega,K,a}:\g\longrightarrow\mathbb{R}$ defined by (\ref{constfut1}) and (\ref{futakickem}) is called the \textit{cKEM-Futaki invariant}.
\end{definition}

\begin{theorem}[{\cite[Corollary 1]{Apostolov2019conformally}}] \label{futvanishing}
The vanishing of $\mathfrak{F}^{\G}_{\Omega,K,a}$ is an obstruction to the existence of a cKEM metric in $\mathcal{H}^{\G}_{\Omega,K,a}$.
\end{theorem}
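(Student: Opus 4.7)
The plan is to exploit the cKEM condition directly to make the integrand in (\ref{futakickem}) vanish pointwise. Suppose a cKEM metric $\tg_{K,a} \in \mathcal{H}^{\G}_{\Omega,K,a}$ exists; by (\ref{hgomegaka}) it is of the form $\tg_{K,a} = f_{K,a,g}^{-2}\, g$ for some $g \in \mathcal{K}^{\G}_{\Omega}$. I would fix this particular $g$ and use it to evaluate both $c_{\Omega,K,a}$ and $\mathfrak{F}^{\G}_{\Omega,K,a}(H)$, invoking \cite[Corollary 1]{Apostolov2019conformally} to justify that both quantities are independent of the choice of $g \in \mathcal{K}^{\G}_{\Omega}$ (and of the normalization constant $b$ used for $f_{H,b,g}$).

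Next, I would observe that condition (ii) of Definition \ref{ckemdef} forces $s_{\tg_{K,a}} = Scal(\tg_{K,a}) \equiv c$ for some real constant $c$. Substituting this constant scalar curvature into the numerator of (\ref{constfut1}), $c$ pulls out of the integral and cancels against the denominator, yielding $c_{\Omega,K,a} = c$. Hence $s_{\tg_{K,a}} - c_{\Omega,K,a} \equiv 0$ on $M$. Plugging this identity into (\ref{futakickem}) shows that the integrand is identically zero for every $H \in \g$ and every admissible $b \in \mathbb{R}$, so $\mathfrak{F}^{\G}_{\Omega,K,a}(H) = 0$ for all $H \in \g$, i.e.\ $\mathfrak{F}^{\G}_{\Omega,K,a} \equiv 0$.

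The argument is therefore essentially tautological once the weighted Futaki invariant has been correctly set up, and the ``main obstacle'' is not really an obstacle at all but a prerequisite: one needs the well-definedness of $c_{\Omega,K,a}$ and $\mathfrak{F}^{\G}_{\Omega,K,a}$ to be independent of the choice of $g \in \mathcal{K}^{\G}_{\Omega}$, which is exactly the content of \cite[Corollary 1]{Apostolov2019conformally} already cited in the definition. No further computation is required.
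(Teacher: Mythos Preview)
Your argument is correct. Note, however, that the paper does not actually supply a proof of this theorem: it is stated as a direct citation of \cite[Corollary 1]{Apostolov2019conformally}, with no argument given in the present text. Your proposal reproduces exactly the standard reasoning behind that corollary --- once the independence of $c_{\Omega,K,a}$ and $\mathfrak{F}^{\G}_{\Omega,K,a}$ from the choice of $g\in\mathcal{K}^{\G}_{\Omega}$ is granted, one evaluates both with the particular $g$ whose conformal rescaling is cKEM, and the constancy of $s_{\tg_{K,a}}$ kills the integrand pointwise. There is nothing to add or correct.
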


 \begin{remark}
The main result in \cite{futaki2018volume} gives a useful characterization of the condition $\mathfrak{F}^{\G}_{\Omega,K,a}\equiv0$. Indeed, the authors prove that $\mathfrak{F}^{\G}_{\Omega,K,a}\equiv0$ if and only if $(K,a)$ is a critical point of the suitably normalized volume functional acting on $\mathcal{P}^{\G}_{\Omega}$. The usefulness of their theorem resides in the fact that it allows for a systematic computation of the vanishing of the cKEM-Futaki invariant.
 \end{remark}

 \section{Toric K\"ahler Manifolds}

From now on, we specialize to the toric case, i.e. we assume that $\G\subset Aut_r(M,J)$ is an $m$-dimensional torus, where $m$ is the complex dimension of $(M,\omega, J)$. We recall that by Theorem \ref{lahdili1}, any cKEM metric $\tilde{g}$ must be obtained from a toric K\"ahler metric $(g,\omega)$. This is the situation studied in \cite{Apostolov2019conformally}, by using the Abreu-Guillemin formalism \cite{Abreu1998kahler, guillemin1994kaehler}.

Let $(M,\omega, \G)$ be a compact symplectic toric manifold and $\mu:M\rightarrow\tfraks$ its moment map. It is well know \cite{Atiyah1982convexity, guillemin1982convexity} that the image of $M$ by $\mu$ is a compact simple convex polytope $\polytope\subset\tfraks$. Furthermore, it is shown in \cite{delzant1988hamiltoniens} that $\polytope$ can be given the structure of a \emph{labelled Delzant polytope} $(\polytope,\L)$, i.e. a compact convex simple polytope with $d$ facets, together with a set $\L=\{L_1,\ldots,L_d\}$ of non-negative affine linear functions $L_i$ defining $\polytope$ by 
$$\polytope:=\{x\in\tfraks: L_i(x)\geq0, i=1,\ldots,d\},$$
and such that $dL_i\in\tfrak$ are primitive elements of the lattice $\lattice\subset\tfrak$ of circle subgroups of $\G$ (\emph{integrality condition}). It also follows from \cite{delzant1988hamiltoniens} that the compact symplectic toric manifold $(M,\omega,\G)$ can be reconstructed from the corresponding labelled integral Delzant polytope $(\polytope,\L)$.

Now, let $(M,g, J, \G)$ be a compact toric K\"ahler manifold and $\mu:M\rightarrow\tfraks$ its moment map. According to \cite{guillemin1994kaehler}, on the dense open subset $M^0:=\mu^{-1}(\polytope^0)$ (where $\polytope^0$ denotes the interior of $\polytope$), the toric Kähler structure $(g,J,\omega)$ can be written in moment-angle coordinates $(x,\tt)$ as:
\begin{equation}\label{momentanglecoords}
    \begin{array}{lcl}
    g=\langle dx,\GG(x),dx\rangle+\langle d\tt,\HH(x),d\tt\rangle, & & Jd\tt=-\langle \GG(x),dx\rangle, \\
    \omega=\langle dx\wedge d\tt\rangle, & & Jdx=-\langle \HH(x),d\tt\rangle,
    \end{array}
\end{equation}
where $\HH$ is a smooth positive definite $S^2t^*$-valued function on the moment image $\polytope^0$ and $\GG=\HH^{-1}$ is its pointwise inverse, a smooth $S^2t$-valued function. Furthermore, $\GG=Hess(u)$ is the Hessian of a real function $u\in\mathcal{C}^{\infty}(\polytope^0)$, called symplectic potential of $(g,J,\omega)$.

We denote by $\symppot$ the set of \emph{symplectic potentials} of globally defined $\G$-invari-ant $\omega$-compatible K\"ahler metrics $(g,J)$ on $(M,\omega, \G)$. By the theory in \cite{Abreu1998kahler, Abreu2001} (see also \cite[Proposition 1]{Apostolov2004} and \cite{donaldson2009constant}), $\symppot$ consists of smooth strictly convex functions ${u\in\mathcal{C}^{\infty}(\polytope^0)}$, whose inverse Hessian 
\begin{equation*}
    \HH^u=(H_{ij}^u)=\left(\frac{\partial^2u}{\partial x_i\partial x_j}\right)^{-1}
\end{equation*}
 is smooth on $\polytope$, positive definite on the interior of any face and satisfies, for every $y$ in the interior of a facet $F_i\subset\Delta$ with inward normal $e_i=dL_i$, the following boundary conditions \cite[Proposition 1]{Apostolov2004}:
\begin{equation} \label{toriccond1}
\HH^u_{y}(e_i,\cdot)=0 \textrm{ and } d\HH_{y}^u(e_i,e_i)=2e_i.
\end{equation}

\begin{remark}
$\mathcal{S}(\polytope,\L)$ can be introduced independent of the integrality condition on $(\polytope,\L)$, as in \cite{donaldson2002scalar}.
\end{remark}

In \cite{Abreu1998kahler}, Abreu computed the scalar curvature of the metric (\ref{momentanglecoords}) associated to a symplectic potential $u\in\mathcal{S}(\polytope,\L)$ to be  the pull-back by the moment map of the smooth function on $\polytope$
\begin{equation} \label{toriccond2}
S(u)=-\sum_{i,j=1}^m\frac{\partial^2H_{ij}^u}{\partial x_i\partial x_j}.
\end{equation}
In the above formula and in what follows, we use the conventions of \cite{Abreu2001, Apostolov2004, donaldson2009constant} (see also Apostolov\textquotesingle s lecture notes \cite{apostolovlecturenotes}).
 
Notice that in the toric setting the space of Killing potentials of elements in $\g$ with respect to $(g,\omega)$ is in one-to-one correspondence with affine linear functions (pulled-back by $\mu$) on $\g^*$. The \emph{extremal affine linear function} $\ext$ is the $L^2$-projection (with respect to the euclidean measure) of $S(u)$ to the finite dimensional space of affine linear functions on $\tfraks$. In fact, $\ext$ is independent of the symplectic potential $u\in\mathcal{S}(\polytope,\L)$ (see \cite{donaldson2002scalar}) and may also be defined as the solution of a linear system depending only on $(\polytope,\L)$.

Any solution $u\in\symppot$ of 
\begin{equation}\label{abreueq}
S(u)=-\sum_{i,j=1}^m\frac{\partial^2H_{ij}^u}{\partial x_i\partial x_j}=\ext
\end{equation}
gives rise to an \emph{extremal K\"ahler metric} and (\ref{abreueq}) is know as the \emph{Abreu equation}. The cscK case reduces to the special situation when $\ext$ is constant.

In the case when $(M,\omega,J,\G)$ is a toric K\"ahler manifold and $f$ is an affine linear function on $\tfraks$ which is positive on $\polytope$, the scalar curvature of $\tg=f^{-2}g$ is computed in \cite{Apostolov2019conformally} to be 
\begin{equation} \label{fscal}
S_{\tg}(u)=-f^{2m+1}\sum_{i,j=1}^m\left(\frac{1}{f^{2m-1}}H_{ij}^u\right)_{,ij}.
\end{equation}

Closely related to the discussion above, it is proved in \cite{Apostolov2019conformally} that the $L^2$-projection of (\ref{fscal}) to the space of affine linear functions on $\tfraks$ is independent of $g$ (i.e. of $u\in\symppot$) and in the same way one can consider the following \emph{weighted Abreu equation} for $u\in\symppot$:
\begin{equation}\label{fextscal}
-f^{2m+1}\sum_{i,j=1}^m\left(\frac{1}{f^{2m-1}}H_{ij}^u\right)_{,ij}=\fext,
\end{equation}
where $\fext$ is defined in terms of $(\polytope,\L,f)$.

Solutions to the problem above are called \emph{$(f,2m)$-extremal K\"ahler metrics} and in the special case when  $\fext$ is constant, the metric $f^{-2}g$ is conformally K\"ahler, Einstein-Maxwell.

More generally, one can define \cite{Apostolov2019conformally, lahdili2019automorphisms} a \emph{$(f,w)$-extremal toric K\"ahler metric} as a solution of the equation
\begin{equation}\label{wextscal}
-f^{w+1}\sum_{i,j=1}^m\left(\frac{1}{f^{w-1}}H_{ij}^u\right)_{,ij}=\wext,
\end{equation}
for $u\in\symppot$, $f$ a positive affine linear function on $\polytope$, and $\wext$ an affine linear function determined by $(\polytope,\L,f,w)$.

\begin{theorem}[{\cite[Theorem 3]{Apostolov2019conformally}}] 
Any two solutions $u_1,u_2\in\mathcal{S}(\polytope,\L)$ of (\ref{wextscal}) differ by an affine linear function.  In particular, on a compact toric K\"ahler manifold $(M,\omega, J, \G)$, for any fixed positive affine linear function in momenta $f=f_{K,a,g}$, there exists at most one, up to a $\mathbb{T}$-equivariant isometry, $\omega$-compatible $\mathbb{T}$-invariant K\"ahler metric $g$ for which $\tilde{g}_{K,a}=f^{-2}g$ is a conformally K\"ahler, Einstein-Maxwell metric.
\end{theorem}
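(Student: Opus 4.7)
The plan is to adapt Donaldson's convex-functional approach to uniqueness of extremal toric K\"ahler metrics \cite{donaldson2002scalar} to the weighted setting of (\ref{wextscal}). Specifically, I would work with a weighted Mabuchi-type functional on $\symppot$ of the form
\begin{equation*}
\mathcal{M}_{f,w}(u) \;=\; -\int_{\polytope} \log\det\bigl(\mathrm{Hess}(u)\bigr)\, \frac{dx}{f^{w-1}} \;+\; L_{f,w}(u),
\end{equation*}
where $L_{f,w}$ is a linear functional assembled from a boundary integral $\int_{\partial \polytope} u\, d\sigma_{f,w}$ against a suitable weighted measure, together with a bulk term $-\int_\polytope \wext \cdot u \cdot \frac{dx}{f^{w+1}}$. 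The weights and $d\sigma_{f,w}$ are chosen precisely so that, after the Abreu-style double integration by parts using the boundary conditions (\ref{toriccond1}), the Euler--Lagrange equation of $\mathcal{M}_{f,w}$ on $\symppot$ becomes exactly (\ref{wextscal}); this identification is carried out in \cite{Apostolov2019conformally, lahdili2019automorphisms}.

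The next step is to exploit convexity. The space $\symppot$ is convex under linear interpolation: if $u_1, u_2 \in \symppot$, then $u_t := (1-t)u_1 + t u_2$ still has positive definite Hessian and, since each $u_i$ differs from the Guillemin potential $u_0 = \tfrac{1}{2}\sum_i L_i \log L_i$ by a smooth function on $\polytope$, the same is true for $u_t$, so $u_t \in \symppot$. Setting $v := u_2 - u_1$, a direct computation (the linear piece $L_{f,w}$ contributes nothing to the second variation) gives
\begin{equation*}
\frac{d^2}{dt^2}\mathcal{M}_{f,w}(u_t) \;=\; \int_\polytope \mathrm{tr}\Bigl(\bigl(\HH^{u_t}\,\mathrm{Hess}(v)\bigr)^2\Bigr)\, \frac{dx}{f^{w-1}}.
\end{equation*}
Because $1/f^{w-1}$ is strictly positive on $\polytope$ and the matrix integrand is pointwise non-negative, $t \mapsto \mathcal{M}_{f,w}(u_t)$ is convex, and strictly so unless $\mathrm{Hess}(v)\equiv 0$, i.e. unless $v$ is affine linear.

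For the conclusion: if $u_1$ and $u_2$ both solve (\ref{wextscal}), they are both critical points of $\mathcal{M}_{f,w}$, so $\tfrac{d}{dt}\mathcal{M}_{f,w}(u_t)$ vanishes at $t = 0$ and at $t = 1$. Convexity then forces the second derivative to vanish identically on $[0,1]$, giving $\mathrm{Hess}(u_2 - u_1) \equiv 0$, so $u_2 - u_1$ is affine linear on $\polytope$. The K\"ahler structure $(g, J)$ on $(M,\omega)$ is recovered from $u$ through (\ref{momentanglecoords}) solely via $\mathrm{Hess}(u)$ and its inverse $\HH^u$, so an affine linear change in $u$ preserves $(g, J)$ up to the translation ambiguity in the angle coordinates $\tt$, which is a $\G$-equivariant isometry. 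Hence the $\omega$-compatible $\G$-invariant metric $g$ for which $\tilde g_{K,a} = f^{-2} g$ is cKEM is unique, as claimed.

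The main technical obstacle lies entirely in the first step: producing the correct weighted Mabuchi functional $\mathcal{M}_{f,w}$ and, in particular, identifying the boundary measure $d\sigma_{f,w}$ so that the weighted integration by parts against test functions not vanishing on $\partial\polytope$ reproduces (\ref{wextscal}). Once this bookkeeping is in place, the strict convexity of $-\log\det$ on positive-definite symmetric matrices — the only non-trivial analytic ingredient — immediately closes the argument, just as in the unweighted case of \cite{donaldson2002scalar}.
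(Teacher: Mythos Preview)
The paper does not contain a proof of this theorem: it is quoted verbatim from \cite[Theorem~3]{Apostolov2019conformally} and used as a black box, so there is no in-paper argument to compare against.

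That said, your proposal is correct and is precisely the route taken in the cited reference. The weighted Mabuchi-type functional with the nonlinear part $-\int_\polytope \log\det(\mathrm{Hess}(u))\,f^{1-w}\,dx$ does have (\ref{wextscal}) as its Euler--Lagrange equation once the weighted integration-by-parts identity (the analogue of Donaldson's Lemma~3.3.5) is set up, and the strict convexity of $-\log\det$ on positive-definite matrices then forces any two critical points to differ by an affine function, exactly as you describe. One small over-statement in your last paragraph: adding an affine function to $u$ leaves $\mathrm{Hess}(u)$, hence $\GG$, $\HH$, $J$ and $g$, \emph{literally} unchanged via (\ref{momentanglecoords}); no angular translation is needed, and the ``up to $\G$-equivariant isometry'' clause in the statement is in fact redundant at the level of symplectic potentials.
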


Similarly to the extremal toric case studied in \cite{donaldson2002scalar}, there exists an obstruction to find a solution to (\ref{wextscal}) which is called \emph{$(f,w)$-$K$-stability} of $(\polytope,\L,f)$, which we now explain following \cite{Apostolov2019conformally, lahdili2019kahler}.

\begin{definition}\label{definvfutaki}
The \emph{$(f,w)$-Donaldson-Futaki invariant} $\fwDonFutinv$ of a labelled compact simple convex polytope $(\polytope,\L)$ and a given positive affine linear function $f$ on $\polytope$ is defined by
\begin{equation} \label{invfutaki}
\fwDonFutinv(\phi)=2\int_{\partial\polytope}\frac{\phi}{f^{w-1}} d\sigma-\int_{\polytope}\frac{\phi}{f^{w+1}}\wext dx,
\end{equation}
where $dx$ is an euclidean measure on $\polytope$ and $d\sigma$ is a measure on any facet ${F_i\subset\polytope}$ defined by $dL_i\wedge d\sigma=-dx$. In the above formula, the affine linear function $\wext$ is the unique affine linear function such that $\fwDonFutinv(\phi)=0$ for all affine linear functions $\phi$ on $\polytope$.
\end{definition}

\begin{definition}
 A labelled polytope $(\polytope, \L)$ is \emph{$(f,w)$-$K$-stable} if the associated $(f,w)$-Donaldson-Futaki invariant $\fwDonFutinv$ is non-negative on any convex piecewise affine linear function $\phi$ on $\polytope$, and vanishes if and only if $\phi$ is affine linear.
\end{definition}

\begin{remark} Note that if we take $f\equiv 1$ in Definition \ref{definvfutaki}, then we recover the usual (relative) \emph{Donaldson-Futaki invariant} introduced in \cite{donaldson2002scalar, zhou2008stability}. Also, the $(f,2m)$-Donaldson-Futaki invariant, hereafter denoted by $\fDonFutinv$, is equal to $(2\pi)^{-m}$ times the Futaki invariant defined by (\ref{futakickem}), when restricted to functions $\phi$ which are affine linear in momenta. 
\end{remark}

\begin{theorem}[\cite{Apostolov2019conformally}] \label{am19thm}
If $\fext=c$ is constant and (\ref{fscal}) admits a solution $u\in\symppot$ then $(\polytope,\L)$ is $(f,2m)$-$K$-stable. 
\end{theorem}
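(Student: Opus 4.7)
The plan is to adapt Donaldson's classical integration-by-parts computation for the usual Abreu equation to the weighted setting, thereby expressing $\fDonFutinv(\phi)$ as a manifestly non-negative quadratic-type expression in $\phi$ whenever $u$ solves the weighted Abreu equation. Concretely, I set $\hat{H}_{ij}:=H^u_{ij}/f^{2m-1}$, so that equation~(\ref{fextscal}) with $\fext=c$ reads $-\sum_{ij}\hat{H}_{ij,ij}=c/f^{2m+1}$ pointwise on $\polytope^0$. I then substitute this into the definition of the $(f,2m)$-Donaldson--Futaki invariant:
\begin{equation*}
\fDonFutinv(\phi)=2\int_{\partial\polytope}\frac{\phi}{f^{2m-1}}\,d\sigma-\int_{\polytope}\frac{\phi}{f^{2m+1}}\,\fext\,dx =2\int_{\partial\polytope}\frac{\phi}{f^{2m-1}}\,d\sigma+\int_{\polytope}\phi\sum_{ij}\hat{H}_{ij,ij}\,dx.
\end{equation*}

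The key step is a double integration by parts on the last integral. Since $f$ is smooth and strictly positive on all of $\polytope$, the boundary conditions~(\ref{toriccond1}) transfer cleanly to $\hat H$: on each facet $F_i\subset\polytope$ with inward primitive normal $e_i=dL_i$, I verify that $\hat{H}(e_i,\cdot)|_{F_i}=0$ and $d\hat{H}(e_i,e_i)|_{F_i}=\tfrac{2}{f^{2m-1}}e_i$, using the fact that $H^u(e_i,e_i)$ vanishes on $F_i$ so the derivative of the weight contributes nothing. Exactly as in Donaldson's computation, the first boundary condition kills the IBP boundary term arising from $\int \phi_{,i}\hat H_{ij}\nu^j\,dS$, while the second boundary condition together with the definition $dL_i\wedge d\sigma=-dx$ converts the remaining boundary term into $-2\int_{\partial\polytope}\phi/f^{2m-1}\,d\sigma$. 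Combining these yields the identity
\begin{equation*}
\int_{\polytope}\phi\sum_{ij}\hat{H}_{ij,ij}\,dx=\int_{\polytope}\sum_{ij}\phi_{,ij}\hat{H}_{ij}\,dx-2\int_{\partial\polytope}\frac{\phi}{f^{2m-1}}\,d\sigma,
\end{equation*}
and hence the sought clean formula
\begin{equation*}
\fDonFutinv(\phi)=\int_{\polytope}\sum_{ij}\phi_{,ij}\frac{H^u_{ij}}{f^{2m-1}}\,dx.
\end{equation*}

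From here non-negativity is immediate on smooth convex $\phi$, because $H^u$ is positive semidefinite on $\polytope$ (positive definite on the interior) and $f^{2m-1}>0$. For a convex piecewise affine linear $\phi$, one interprets $\phi_{,ij}$ as a non-negative matrix-valued Radon measure supported on the creases of $\phi$, and reads off $\fDonFutinv(\phi)\ge 0$ by the same positivity. If $\phi$ is not affine linear, there is at least one crease meeting $\polytope^0$; on this crease the distributional second derivative in the normal direction is a strictly positive delta-type measure, which pairs non-trivially with the positive definite $\hat H$ there, forcing strict positivity.

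The main obstacle I expect is the bookkeeping of boundary terms in the integration by parts with the weight $1/f^{2m-1}$, which requires using both boundary conditions in~(\ref{toriccond1}) simultaneously rather than independently; this is where the smoothness and positivity of $f$ on the closed polytope are essential. A secondary subtlety is the passage from smooth test functions to convex piecewise affine ones, which I handle through the distributional-measure interpretation of $\phi_{,ij}$ (one may alternatively approximate by smooth convex functions and pass to the limit, as in Donaldson).
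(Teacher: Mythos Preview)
The paper does not contain a proof of this statement: it is quoted verbatim as a result from \cite{Apostolov2019conformally} and no argument is given here. So there is no ``paper's own proof'' to compare against.

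That said, your proposal is correct and is precisely the approach taken in the cited reference: Donaldson's integration-by-parts identity adapted to the weighted inverse Hessian $\hat H_{ij}=H^u_{ij}/f^{2m-1}$. Your verification that the boundary conditions (\ref{toriccond1}) transfer to $\hat H$ with the modified normal derivative $d\hat H(e_i,e_i)=\tfrac{2}{f^{2m-1}}e_i$ is the essential point, and your handling of convex piecewise affine linear test functions via the distributional Hessian is standard. One minor remark: your argument nowhere uses that $\fext=c$ is constant---the identity $\fDonFutinv(\phi)=\int_\polytope \phi_{,ij}\hat H_{ij}\,dx$ holds as soon as $u$ solves (\ref{fextscal}), whether or not $\fext$ is constant, so you have in fact proved the slightly stronger statement that existence of an $(f,2m)$-extremal potential implies $(f,2m)$-$K$-stability.
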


To summarize, the existence of $g\in \mathcal{K}^\G_{\Omega}$ which is conformal to an Einstein-Maxwell Hermitian metric is equivalent to the existence of $u\in\symppot$ and a positive affine linear function $f$ on $\polytope$, satisfying (\ref{fextscal}). Moreover, if a solution exists then
\begin{equation}\label{abconditions}
\begin{split}
&(a)\text{ }\fext=c\text{  is constant};\\
&(b)\text{ }(\polytope,\L,f)\text{ is }(f,2m)\text{-}K\text{-stable};
\end{split}
\end{equation}

The constant $c$ in $(a)$ is prescribed by $(\polytope,\L,f)$, via the formula (\cite[Theorem 2]{Apostolov2019conformally}):

\begin{equation} \label{ckemconstant}
c=\ckemconstant :=2\frac{\int_{\partial\polytope}\frac{1}{f^{2m-1}}d\sigma}{\int_\polytope\frac{1}{f^{2m+1}}d\mu}.
\end{equation}
In particular, it is always positive.
It is not known at present whether or not $(a)$ and $(b)$ are sufficient in general, but  a positive answer is given in the special case when $(\polytope,\L)$ is a labelled quadrilateral.

\begin{theorem}[{\cite[Theorem 5]{Apostolov2019conformally}}]  \label{equivthm} 
Let $(M, \omega,\mathbb{T})$ be a compact symplectic toric 4-orbifold whose rational Delzant polytope is a labelled quadrilateral ($\polytope,\L$) and $f$ a positive affine linear function on $\polytope$ which satisfies $(a)$. Then $(b)$ is equivalent to the existence of a $\mathbb{T}$-invariant K\"ahler metric $g$ such that $\tilde{g}=f^{-2}g$ is a conformally K\"ahler, Einstein-Maxwell metric on $M$.
\end{theorem}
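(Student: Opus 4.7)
The plan is to establish the two directions of the equivalence separately. The direction ``existence $\Rightarrow$ $(f,2m)$-$K$-stability'' follows directly from Theorem \ref{am19thm}, which requires no restriction on the combinatorics of $(\polytope,\L)$. So the real content is the converse: assuming $(\polytope,\L,f)$ satisfies condition (a) (i.e.~$\fext = c$ is constant) and is $(f,2m)$-$K$-stable, one must produce a symplectic potential $u\in\symppot$ solving the weighted Abreu equation (\ref{fextscal}) with constant right-hand side, from which the cKEM metric $\tg = f^{-2}g$ is built via the Abreu--Guillemin formalism (\ref{momentanglecoords}).

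My strategy for the converse is to reduce the weighted problem to an unweighted one by the $f$-twist construction of \cite{apostolov_cr_2020} (to be developed in Section 5 of the paper). To the triple $(\polytope,\L,f)$ one associates a new labelled simple convex polytope $(\tpolytope,\tL)$ with the same combinatorics, chosen so that there is a bijection $u\leftrightarrow \tilde u$ between $\symppot$ and $\tsymppot$ which intertwines the weighted Abreu operator $-f^{2m+1}\sum_{i,j}\bigl(f^{-(2m-1)}H^u_{ij}\bigr)_{,ij}$ on $(\polytope,\L)$ with the ordinary Abreu operator $-\sum_{i,j}H^{\tilde u}_{ij,ij}$ on $(\tpolytope,\tL)$, and transforms the boundary conditions (\ref{toriccond1}) into their analogues on $(\tpolytope,\tL)$. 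The same substitution turns the $(f,2m)$-Donaldson--Futaki functional $\fDonFutinv$ into the relative Donaldson--Futaki functional $\tDonFutinv$ on $(\tpolytope,\tL)$, so (a) becomes the statement that the relative extremal affine function $\extt$ is constant, and (b) becomes relative $K$-stability of $(\tpolytope,\tL)$ in the sense of Donaldson \cite{donaldson2002scalar} and Zhou \cite{zhou2008stability}.

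Since the $f$-twist preserves the face lattice of the polytope, when $(\polytope,\L)$ is a labelled quadrilateral so is $(\tpolytope,\tL)$. The problem is thereby reduced to the analogue of Donaldson's theorem for relatively extremal toric K\"ahler metrics on compact symplectic 4-orbifolds whose Delzant polytope is a labelled quadrilateral. This existence statement is exactly what E.~Legendre establishes in \cite{legendre2011toric} by means of the ambitoric ansatz of \cite{Apostolov2015}: any relatively $K$-stable labelled quadrilateral admits a solution to the Abreu equation, realized explicitly either by a Calabi-type ansatz or by an ambitoric metric of parabolic or hyperbolic type. Pulling back the resulting $\tilde u \in \tsymppot$ through the twist correspondence yields the desired $u\in\symppot$, and hence the $\omega$-compatible K\"ahler metric $g$ on $(M,\omega,\mathbb{T})$ for which $\tg=f^{-2}g$ is conformally K\"ahler, Einstein--Maxwell.

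The main obstacle I expect lies in carefully carrying out step one, namely setting up the $f$-twist at the level of symplectic potentials and verifying that it is a genuine equivalence: one must check that the smoothness and positivity conditions on $H^u$ along $\partial\polytope$ transport correctly to $H^{\tilde u}$ along $\partial\tpolytope$ (so that the resulting ambitoric metric produced by Legendre\textquotesingle s theorem indeed extends to a globally defined toric K\"ahler orbifold metric), and that the weighted Donaldson--Futaki functional corresponds precisely to the relative one under the substitution. Once these structural statements are in place, the existence conclusion is transported from \cite{legendre2011toric} at no further cost, while uniqueness of the solution modulo affine linear functions is already built into Theorem 3 of \cite{Apostolov2019conformally}.
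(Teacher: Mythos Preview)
The paper does not give its own proof of this statement: Theorem~\ref{equivthm} is quoted verbatim as \cite[Theorem~5]{Apostolov2019conformally}, and the present paper only \emph{uses} it (together with the $f$-twist machinery of Section~5) to establish the new Theorems~\ref{thm1} and~\ref{thm2}. So strictly speaking there is no in-paper proof to compare against.

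That said, your strategy is a legitimate alternative proof, but it is \emph{not} the route taken in \cite{Apostolov2019conformally}. There, the authors work directly with the weighted Abreu equation~(\ref{fextscal}) on the quadrilateral and show, using the ambitoric ansatz of \cite{Apostolov2015, legendre2011toric}, that it separates into two ODEs for polynomials $A,B$ of degree $\le 4$; $(f,2m)$-$K$-stability is then identified with positivity of $A$ and $B$ on their respective intervals, which guarantees a genuine solution in $\symppot$. Your approach instead first applies the $f$-twist of \cite{apostolov_cr_2020} (which postdates \cite{Apostolov2019conformally}) to transport the problem to an \emph{unweighted} extremal problem on the twisted quadrilateral $(\tpolytope,\tL)$, and then invokes the Legendre/ACG existence theorem there. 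Since $m=2$ gives $2m=m+2$, the $f$-twist indeed matches the cKEM weight, and Corollary~\ref{keycorollary} together with Lemma~\ref{compactificationlemma} make the transport rigorous; the conclusion then follows from \cite{Apostolov2015, legendre2011toric}. This is more conceptual and is precisely the philosophy the present paper adopts for its main results, but it is anachronistic as a proof of \cite[Theorem~5]{Apostolov2019conformally}.

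One correction: you write that under the twist ``(a) becomes the statement that the relative extremal affine function $\extt$ is constant.'' This is false. By Proposition~\ref{keyremark}, $\extt(\tx)=\fext(x)/f(x)=c\,\tf(\tx)$, which is affine linear but \emph{not} constant (unless $f$ is). Fortunately your argument does not actually need this: on the twisted side you only use relative $K$-stability to obtain an \emph{extremal} $\tilde u$ via \cite{Apostolov2015, legendre2011toric}; pulling back gives an $(f,m+2)$-extremal $u$, and it is the \emph{original} condition~(a) on $(\polytope,\L,f)$ that then forces $Scal_f(g)=\fext=c$, i.e.\ the cKEM condition. So the slip is harmless, but the sentence should be removed or corrected.
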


\section{The $f$-twist of a labelled polytope}

In this section we follow \cite{apostolov_cr_2020}, where the authors introduce the \emph{$f$-twist} transform of a labelled polytope. The interested reader can consult the original paper for more details. A special case of the correspondence was first seen in \cite{Apostolov2019conformally} (see Proposition 3) where a bijection between \emph{ambitoric Einstein-Maxwell metrics} and \emph{ambitoric extremal metrics} of positive scalar curvature was found. In \cite{apostolov_cr_2020}, the authors introduce the $f$-twist transform more generally in terms of a pair of K\"ahler metrics arising as transversal K\"ahler structures of Sasaki metrics compatible with the same CR structure and having commuting Sasaki-Reeb vector fields. This leads to an interesting general equivalence between cKEM and extremal K\"ahler metrics in real dimension 4, which is the case we are most interest in.

\begin{definition} \label{ftwistdef}
Let $\polytope$ be a polytope in $\Rm$ containing the origin and $f$ a positive affine linear function on $\polytope$. We define the $f$-\emph{twist} transform $\tpolytope$ of $\polytope$ to be the image of $\polytope$ under the change of variables $T(x)=\tx:=\frac{x}{f(x)}$ where $x=(x_1,\ldots,x_m)$ are the euclidean coordinates of $\Rm$, i.e. $\tx_i=\frac{x_i}{f(x)}$ for $i=1,\ldots,m$. We also define the $f$-twist transform of a function $\phi$ to be the function $\tphi(\tx):=\frac{\phi(x)}{f(x)}$.
\end{definition}

\begin{lemma}\label{afflinlemma}
Let $\phi(x)$ be an affine linear function in the coordinates $x=(x_1,\ldots,x_m)$ in $\Rm$, and $\tphi(\tx)=\frac{\phi(x)}{f(x)}$ its $f$-twist transform. Then $\tphi(\tx)$ is an affine linear function in the coordinates $\tx=(\tx_1,\ldots,\tx_m)$ in $\Rm$. In particular, if $(\polytope,\L)$ is a labelled polytope containing the origin and $f(x)$ is a positive affine linear function  on $\polytope$, then the $f$-twist transform of $(\polytope, \L)$, denoted by $(\tpolytope,\tL)$, is a labelled polytope with respect to the labeling $\tL:=\frac{\L}{f(x)}$ which contains the origin, i.e. $\tpolytope=T(\polytope)$ is defined by $\{\tLL_i(\tx)\geq0; i=1,\ldots,d\} $ where ${\tLL_i(\tx):=\frac{L_i(x)}{f(x)}}$.
\end{lemma}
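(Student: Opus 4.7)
The plan is to do everything by a direct algebraic manipulation, splitting any affine linear function into its constant part and its purely linear part. Write $f(x)=f_0+\ell(x)$ where $f_0=f(0)>0$ (since $0\in\polytope$ and $f$ is positive on $\polytope$) and $\ell$ is linear; similarly write the given affine linear function as $\phi(x)=\phi_0+m(x)$ with $m$ linear. Since $\tx_i=x_i/f(x)$, for any linear form $\lambda(x)=\sum c_i x_i$ we immediately get
\begin{equation*}
\lambda(x)/f(x)=\sum c_i\,\tx_i=\tilde{\lambda}(\tx),
\end{equation*}
i.e.\ the $f$-twist of a purely linear form is the \emph{same} linear form evaluated in the new coordinates. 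This is the one observation that makes everything work.

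Applying this to $\ell$ itself yields $\ell(x)/f(x)=\tilde{\ell}(\tx)$, and then $f(x)=f_0+\ell(x)=f_0+\tilde{\ell}(\tx)f(x)$, which I would solve for $f(x)$ to obtain
\begin{equation*}
f(x)=\frac{f_0}{1-\tilde{\ell}(\tx)}, \qquad x_i=\tx_i f(x)=\frac{f_0\,\tx_i}{1-\tilde{\ell}(\tx)}.
\end{equation*}
In particular $1-\tilde{\ell}(\tx)>0$ on $T(\polytope)$, and these formulas exhibit $T$ as a smooth bijection between $\polytope$ and its image, with a smooth inverse. Now for an arbitrary affine linear $\phi=\phi_0+m$,
\begin{equation*}
\tphi(\tx)=\frac{\phi(x)}{f(x)}=\frac{\phi_0}{f(x)}+\frac{m(x)}{f(x)}
=\frac{\phi_0}{f_0}\bigl(1-\tilde{\ell}(\tx)\bigr)+\tilde{m}(\tx),
\end{equation*}
which is manifestly affine linear in $\tx$. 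This proves the first sentence of the lemma.

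For the second part, I would apply the above to each labelling function $L_i$ to produce the affine linear functions $\tLL_i(\tx)=L_i(x)/f(x)$; denote $\tL=\{\tLL_1,\dots,\tLL_d\}$. Since $f(x)>0$ on $\polytope$, one has $L_i(x)\geq 0$ iff $\tLL_i(\tx)\geq 0$, so the bijection $T$ identifies
\begin{equation*}
\polytope=\{x:L_i(x)\geq 0,\ i=1,\dots,d\}\ \text{with}\ \tpolytope=\{\tx:\tLL_i(\tx)\geq 0,\ i=1,\dots,d\}.
\end{equation*}
The origin lies in $\tpolytope$ because $T(0)=0$. This exhibits $(\tpolytope,\tL)$ as a labelled polytope with labels $\tL=\L/f$ and finishes the proof.

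The only real point to watch is the invertibility and positivity statement $1-\tilde{\ell}(\tx)>0$ on $\tpolytope$; everything else is formal. I expect no genuine obstacle, since these facts follow immediately from positivity of $f$ on $\polytope$ and the closed-form inversion above.
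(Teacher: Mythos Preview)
Your proof is correct and follows essentially the same approach as the paper: both split an affine function into its constant and linear parts, use that a purely linear form divided by $f$ becomes the same linear form in the $\tx$-coordinates, compute $1/f(x)$ explicitly as an affine linear function of $\tx$, and then combine. The only cosmetic difference is that the paper works in explicit coordinates $a_i,b_i$ while you use the more abstract notation $\ell,m$; your version additionally spells out the invertibility of $T$ and the fact that $T(0)=0$, which the paper leaves implicit.
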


\begin{remark}
When $(\polytope,\L)\subset\Rm$ is a rational Delzant polytope associated to a compact toric orbifold, we can assume without loss of generality that the origin is inside $\polytope$. The last claim of Lemma~\ref{afflinlemma} then follows from \cite{legendre2011existence} and the geometric interpretation of the $f$-twist transform given in \cite[Theorem 1 and Lemma 5]{apostolov_cr_2020}.
\end{remark}

\begin{proof}
Let $\phi(x)=b_0+b_1x_1+\cdots+b_mx_m$ be an affine linear function in the coordinates $(x_1,\ldots,x_m)$. We observe that
\begin{equation} \label{linear1}
\tphi(\tx)=\frac{b_0}{f(x)}+\sum_{i=1}^n\left(b_i\frac{x_i}{f(x)}\right)=\frac{b_0}{f(x)}+\sum_{i=1}^n b_i\tx_i.
\end{equation}
Also, for ${f(x)=a_0+\sum_{i=1}^na_ix_i}$, we have
\begin{equation} \label{linear2}
\frac{1}{f(x)}=\frac{1}{a_0}\left(1-a_1\tx_1-\ldots-a_n\tx_m\right).
\end{equation}
It follows from equations (\ref{linear1}) and (\ref{linear2}) that
\begin{equation*}
\tphi(\tx)=\frac{1}{a_0}\left( b_0+\sum_{i=1}^n \left(b_i-b_0a_i\right)\tx_i\right),
\end{equation*}
establishing the first part of the Lemma.

For the second part we observe that given the labelled Delzant polytope $(\polytope,\L)$ then $\polytope=\{x\in\Rm: L_i(x)\geq0, i=1,\ldots,d\}$. So $x\in\polytope$ if and only if $\tx\in\tpolytope$ or equivalently $\tpolytope:={\{\tx\in\Rm: \tLL_i(\tx)\geq0, i=1,\ldots,d\}=T(\polytope)}$.
\end{proof}

\begin{remark}\mbox{}
\begin{enumerate}
\item[(i)] Equation (\ref{linear2}) in the proof of the Lemma \ref{afflinlemma} defines a distinguished affine linear function in the new coordinates $\tx$, which hereafter we will denote by $$\tf(\tx):=\frac{1}{f(x)}=\frac{1}{a_0}\left(1-a_1\tx_1-\ldots-a_n\tx_m\right);$$ 

\item[(ii)] For a given affine linear function $\phi$ defined on $\polytope$, we have 
\begin{equation} \label{twistpullbackrelation}
\begin{aligned}
\phi(x)=(\T^\ast\phi)(\tx)=\frac{\tilde{\phi}(\tx)}{\tf(\tx)}.
\end{aligned}
\end{equation}
\end{enumerate}

\end{remark}

For a symplectic potential $u\in\symppot$ we consider the $f$-twist transform of $u$ defined by 
\begin{equation}\label{potentialtwist}
\tu(\tx):=\frac{u(x)}{f(x)}.
\end{equation}
Then we have

\begin{lemma} \label{compactificationlemma}
If $u\in\symppot$, then $\tu\in\tsymppot$.
\end{lemma}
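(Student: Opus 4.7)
My plan is to verify the three conditions characterizing elements of $\tsymppot$ stated just before (\ref{toriccond1}): smoothness and strict convexity of $\tu$ on $\tpolytope^0$, smooth extendability of $\HH^{\tu}$ to $\tpolytope$ with positive-definiteness on the interior of each face, and the Guillemin boundary conditions (\ref{toriccond1}) at each facet $\tilde F_i \subset \tpolytope$. Since $T(x)=x/f(x)$ is a diffeomorphism from $\polytope$ onto $\tpolytope$ with smooth inverse $\tx \mapsto \tx/\tf(\tx)$ (well-defined because $\tf>0$ on $\tpolytope$), the smoothness of $\tu = \tf \cdot (u \circ T^{-1})$ on $\tpolytope^0$ is immediate.

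The central technical step is to derive a closed-form expression for $\HH^{\tu}(\tx)$ in terms of $\HH^u(x)$ and the affine data of $f$. Setting $h(x) := u(x)/f(x)$ and $a := df$, the affineness of $f$ yields
\begin{equation*}
\mathrm{Hess}(h)(x) \;=\; \frac{1}{f(x)}\,\mathrm{Hess}(u)(x) \;-\; \frac{1}{f(x)^2}\bigl(\nabla u\, a^T + a\, \nabla u^T\bigr) \;+\; \frac{2 u(x)}{f(x)^3}\, a a^T,
\end{equation*}
a symmetric rank-at-most-two perturbation of $\mathrm{Hess}(u)/f$. Inverting via the Sherman--Morrison--Woodbury identity produces $\mathrm{Hess}(h)^{-1}$ as a rational expression involving $\HH^u$, $\nabla u$, $u$, $a$, and $1/f$, all smooth on $\polytope$. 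Applying the chain rule to transfer from $x$- to $\tx$-coordinates (accounting for $J=DT$ and its variation) then yields $\HH^{\tu}(\tx)$ in the analogous closed form. Smoothness up to $\partial\tpolytope$, strict convexity of $\tu$, and positive-definiteness of $\HH^{\tu}$ on the interior of every face of $\tpolytope$ all follow from the corresponding properties of $\HH^u$, the positivity of $f$ and $\tf$, and the invertibility of $J$.

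The main obstacle is the preservation of the two Guillemin boundary conditions. The pull-back by $T$ of the inward normal $\tilde e_i = d\tLL_i$ to $\tilde F_i = T(F_i)$ is $T^*(\tilde e_i) = (f\, dL_i - L_i\, df)/f^2$, which restricts to $e_i/f$ at any point of $F_i$. Translating $\HH^{\tu}_{\tilde y}(\tilde e_i,\cdot)=0$ on $\tilde F_i$ back to $x$-coordinates via $J$ reduces this first condition, by direct substitution in the formula for $\HH^{\tu}$, to the known identity $\HH^u_y(e_i,\cdot)=0$ on $F_i$, provided one checks that each Sherman--Morrison correction term carries either a factor of $L_i$ or a factor of $\HH^u(e_i,\cdot)$. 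For the normalization $d\HH^{\tu}_{\tilde y}(\tilde e_i,\tilde e_i)=2\tilde e_i$, I would differentiate the formula transverse to $\tilde F_i$ and invoke $d\HH^u_y(e_i,e_i) = 2 e_i$; after collecting the accumulated powers of $f$ and the terms depending on $a$, the factors should combine to give exactly $2\tilde e_i$. This bookkeeping is the genuine work of the proof; no new idea beyond the known Guillemin conditions for $u$ and the affineness of $f$ is required.
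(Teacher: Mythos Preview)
Your approach is genuinely different from the paper's, and it contains a real gap. The paper does \emph{not} verify the Guillemin boundary conditions (\ref{toriccond1}) directly. Instead it invokes Abreu's equivalent characterization (Theorem~\ref{abreuthm}): a function $\tu$ lies in $\tsymppot$ if and only if $\tu = \tu_{\tpolytope} + \tphi$ with $\tphi$ smooth on all of $\tpolytope$, $\mathrm{Hess}(\tu)$ is positive definite on $\tpolytope^0$, and $\det(\mathrm{Hess}(\tu))$ has the prescribed form $\tdelta/\prod_r \tLL_r$ with $\tdelta$ smooth and positive on $\tpolytope$. The first of these is an easy algebraic rewriting, the determinant condition follows from the single scalar identity $\det(\tGG) = \tfrac{f^{m+2}}{a_0^2}\det(\GG)$ (Lemma~\ref{projhessian}), and positive definiteness is imported from \cite{apostolov_cr_2020}. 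No explicit formula for $\HH^{\tu}$ is needed.

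The gap in your route is the sentence ``a rational expression involving $\HH^u$, $\nabla u$, $u$, $a$, and $1/f$, all smooth on $\polytope$.'' This is false: neither $u$ nor $\nabla u$ is smooth on $\polytope$. A symplectic potential has the form $u = \tfrac12\sum_r L_r\log L_r + h$ with $h$ smooth, so $\nabla u$ carries $\log L_r$ singularities along each facet and $u$ itself is only continuous there. Consequently the Sherman--Morrison--Woodbury output, which will contain terms like $\HH^u\nabla u$, is not manifestly smooth up to $\partial\polytope$; near $F_i$ such a product behaves like $L_i\log L_i$, which is continuous but not $C^\infty$. Since smooth extendability of $\HH^{\tu}$ to the closed polytope is precisely the delicate point, you cannot simply assert it---you would have to exhibit the exact cancellations that kill every logarithmic contribution, and that bookkeeping is substantially harder than what you have sketched. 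The paper's determinant route sidesteps this entirely, because $\det(\GG)$ already packages the boundary behaviour in the clean form $\delta/\prod_r L_r$, and the transformation law for the determinant is a single identity rather than a matrix-valued one.
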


In the case when $(\polytope,\L)$ is rational, this result compared with \cite[Theorem 1]{apostolov_cr_2020} and Lemma \ref{afflinlemma}, yields the claim in Lemma \ref{compactificationlemma}. Here we give a general argument for the sake of completeness. In order to prove Lemma \ref{compactificationlemma}, we first recall a result from \cite{Abreu2001} (see also \cite[Lemma 3]{Apostolov2004}).

\begin{theorem}[{\cite[Theorem 2]{Abreu2001}}] \label{abreuthm}
Let $(M,\omega,\G)$ be the toric symplectic manifold associated to a labelled Delzant polytope $(\polytope,\L)$, and $J$ any $\omega$-compatible toric complex structure. Then $J$ is determined in moment-angle coordinates $(x,\tt)\in M^o\cong\polytope^0\times\G^n$ by
\begin{equation*}
\begin{cases} 
       	Jd\tt=&-\langle \GG^u(x),dx\rangle\\
		Jdx=&-\langle \HH^u(x),d\tt\rangle
\end{cases}
\end{equation*}
in terms of a symplectic potential $u\in\symppot$ of the form
\begin{equation}\label{star}
u=u_\polytope+h,
\end{equation}
where $u_\polytope=\frac{1}{2}\sum_r L_r\log(L_r)$ is the so-called Guillemin potential, $h$ is a smooth function on the whole of $\polytope$, the matrix $\HH^u=(\GG^u)^{-1}$ with $\GG^u=Hess(u)$ positive definite on $\polytope^o$ and having determinant of the form 
\begin{equation}\label{2star}
det(\GG)=\frac{\delta(x)}{\Pi_rL_r(x)},
\end{equation}
where $\delta$ is a smooth and strictly positive function on the whole $\polytope$.\\
Conversely any such $u$ determines a compatible toric complex structure on $(M,\omega)$, which in suitable $(x,\tt)$ coordinates of $\polytope^o\times\G^n$ has the form 
\begin{equation*}
\begin{cases} 
       	Jd\tt=&-\langle \GG^u(x),dx\rangle\\
		Jdx=&-\langle \HH^u(x),d\tt\rangle
\end{cases}
\end{equation*}
\end{theorem}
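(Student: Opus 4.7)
The plan is to prove Abreu's characterization in two directions: necessity of the normal form $u=u_\polytope+h$ together with the determinant condition, and the converse that any such $u$ recovers a smooth compatible toric complex structure on $(M,\omega)$. The starting point for necessity is the Delzant description of $(M,\omega,\G)$ and its action-angle coordinates on the open dense subset $M^o=\mu^{-1}(\polytope^o)\cong\polytope^o\times\G^n$, in which $\omega=\langle dx\wedge d\tt\rangle$. Compatibility of a $\G$-invariant complex structure $J$ with $\omega$, combined with the torus symmetry, forces $J$ on $M^o$ to be encoded by a positive-definite symmetric matrix function $\GG(x)$ via the formulas in the statement. Integrability of $J$ (vanishing Nijenhuis tensor) together with the fact that $\GG$ depends only on $x$ then yields Guillemin's classical conclusion that $\GG=\mathrm{Hess}(u)$ for a strictly convex $u\in C^\infty(\polytope^o)$. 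This settles the form of $J$ on $M^o$ modulo the behavior at the boundary.

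The heart of the argument is the boundary analysis that turns smoothness of $J$ across $M\setminus M^o$ into the conditions \eqref{star} and \eqref{2star}. I would work locally near a point of the preimage of the interior of a facet $F_i$ with primitive inward normal $e_i=dL_i$, where the corresponding circle subgroup of $\G$ acts with a single degenerate direction. Choosing a local complex Darboux chart centered at such a point and comparing with the standard flat model $\mathbb{C}\times(\mathbb{C}^\ast)^{m-1}$ (in which the moment map has the explicit form $|z|^2/2$ in the degenerate direction), smoothness of $J$ forces the boundary conditions $\HH^u_y(e_i,\cdot)=0$ and $d\HH^u_y(e_i,e_i)=2e_i$ on $F_i^o$, as in \cite[Proposition 1]{Apostolov2004}. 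The key observation is that the Guillemin potential $u_\polytope=\tfrac12\sum_r L_r\log L_r$, obtained from the canonical Delzant reduction of $\mathbb{C}^d$, already satisfies these boundary conditions in the most singular way compatible with the integrality data $\L$; and a direct computation shows that $\HH^{u_\polytope}$ is smooth up to $\polytope$ and that $\det\GG^{u_\polytope}=\frac{\delta_0}{\prod_r L_r}$ with $\delta_0$ smooth and strictly positive. By matching the logarithmic singularities of $u$ and $u_\polytope$ along each facet and iterating on faces of higher codimension, one concludes that $h:=u-u_\polytope$ extends smoothly to the whole of $\polytope$. Feeding $u=u_\polytope+h$ into the Hessian and using that the smooth perturbation $\mathrm{Hess}(h)$ cannot kill the leading singular behavior of $\det\GG^{u_\polytope}$ (since $J$ is globally non-degenerate, $\HH^u$ has the maximal rank consistent with the boundary conditions on each face), one obtains $\det\GG^u=\delta/\prod_r L_r$ with $\delta$ smooth and strictly positive on $\polytope$, establishing \eqref{2star}.

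For the converse, I would run the local argument in reverse. Given $u=u_\polytope+h$ satisfying \eqref{star} and \eqref{2star}, the formulas in the statement define a $\G$-invariant complex structure $J$ on $M^o$ compatible with $\omega$. The smoothness of $h$ on $\polytope$ combined with the strict positivity of $\delta$ ensures that $\HH^u$ satisfies the boundary conditions on each facet and has the correct rank on each face. Transferring this via the Delzant chart on $\mathbb{C}^d$ (where $u_\polytope$ is precisely the symplectic potential corresponding to the standard complex structure), $J$ extends smoothly as an integrable compatible complex structure across every preimage of a face, producing a $\G$-invariant element of the Kähler class of $(M,\omega)$ with symplectic potential $u$.

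The main obstacle I anticipate is the local analysis near faces of codimension $\geq 2$: one must verify that the matching of singularities at a facet propagates correctly to lower-dimensional strata, so that \eqref{star} and \eqref{2star} are genuinely equivalent to global smoothness of $J$. This is handled cleanly by working in the symplectic quotient model of Delzant, where every face is locally modelled on a product of the form $\mathbb{C}^k\times(\mathbb{C}^\ast)^{m-k}$ and both the condition that $u-u_\polytope$ is smooth and the determinant condition can be read off directly from smoothness and non-degeneracy of $J$ in the corresponding complex chart; positivity of $\delta$ at the vertices then follows from the Delzant condition on $\L$.
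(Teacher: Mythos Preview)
The paper does not prove this theorem. It is quoted verbatim as \cite[Theorem 2]{Abreu2001} and used as a black box in the proof of Lemma~\ref{compactificationlemma}; the only supplementary comment is the remark that the arguments in \cite[Proposition~1]{Apostolov2004} show the conditions \eqref{star} and \eqref{2star} are equivalent to the defining boundary conditions of $\symppot$ even without the integrality assumption on $(\polytope,\L)$. So there is no ``paper's own proof'' to compare your proposal against.

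That said, your outline is a reasonable sketch of the argument in Abreu's original paper and its refinement in \cite{Apostolov2004}: reduce to action-angle coordinates on $M^o$, use integrability to get $\GG=\mathrm{Hess}(u)$, then analyze the boundary behavior by comparison with the Guillemin potential in local Delzant charts. Just be aware that in the context of the present paper you are not expected to reproduce this; the theorem is invoked, not established.
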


\begin{remark}
The arguments in \cite[Proposition 1]{Apostolov2004} show that, more generally, (\ref{star}) and (\ref{2star}) are equivalent with the defining smoothness, positivity, and boundary conditions (see (\ref{toriccond1}) above) of $\symppot$, independent of the integrality of $(\polytope,\L)$.
\end{remark}

We will also need the following

\begin{lemma}{\cite{apostolov_cr_2020}} \label{projhessian}
Let $u\in\symppot$ and consider $f(x)=a_0+\sum_{i=1}^da_ix_i$ an affine linear function which is positive on $\polytope$ containing the origin. If $\tu(\tx)=\frac{u(x)}{f(x)}$, then $\GG=Hess_x(u)$ and $\tGG=Hess_{\tx}(\tu)$ are related by $$\det(\tGG)=\frac{f^{m+2}(x)}{a_0^2}\det \GG. $$
\end{lemma}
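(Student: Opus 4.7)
My plan is to establish the sharper matrix identity
\[
\tGG \;=\; f\, Q^T\, \GG\, Q, \qquad Q := I + \tfrac{1}{a_0}\, x\, a^T,
\]
where $a = (a_1,\dots,a_m)^T$ collects the linear coefficients of $f(x) = a_0 + a^T x$; the claimed determinant formula then follows at once from two applications of the matrix determinant lemma.

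First I would invert the twist map. From $\tx = x/f(x)$ one reads off the Jacobian $J = \tfrac{1}{f}\bigl(I - \tfrac{1}{f}\, x a^T\bigr)$; the Sherman--Morrison formula then gives $J^{-1} = f\bigl(I + \tfrac{1}{a_0}\, x a^T\bigr) = f\,Q$ and $\det J = a_0/f^{m+1}$. A convenient by-product is the identity $\tf \cdot J^{-1} = Q$, where $\tf(\tx) := (1 - a^T\tx)/a_0 = 1/f(x)$ is the affine linear function of $\tx$ that coincides with $1/f$ after substitution.

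The core step is a direct Hessian computation. Writing $\tu(\tx) = \tf(\tx)\, u(x(\tx))$ and exploiting $\tf \cdot J^{-1} = Q$, one differentiation yields the clean expression
\[
\partial_{\tx_i}\tu \;=\; u_i + \tfrac{a_i}{a_0}\bigl(x\cdot\nabla u - u\bigr).
\]
Differentiating once more, using $\partial x_l/\partial \tx_k = (J^{-1})_{lk}$, the first-derivative contributions $u_l$ cancel (because $f$ is affine, so $\nabla u$ enters only through the combination $\GG x$), leaving the compact matrix identity $\tGG = Q^T\,\GG\,J^{-1} = f\,Q^T\,\GG\,Q$. Taking determinants and using $\det Q = 1 + \tfrac{1}{a_0}\,a^T x = f/a_0$ (matrix determinant lemma again), one obtains $\det\tGG = f^m(f/a_0)^2\det\GG = (f^{m+2}/a_0^2)\det\GG$.

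The main obstacle will be the second-derivative bookkeeping in the core step: one must carefully track the correction terms arising from differentiating the nonconstant Jacobian and verify that they all assemble into the symmetric conjugation $Q^T\,\GG\,Q$. A conceptually cleaner way to see that this \emph{must} be the outcome is via the associated cones: introduce the degree-one homogeneous extensions $\hat{u}(x_0, x) := x_0\, u(x/x_0)$ and $\hat{\tu}(y_0,\tx) := y_0\,\tu(\tx/y_0)$. The twist relation $\tu = u/f$ then upgrades to the identity $\hat{u} = \hat{\tu}\circ\hat{T}$ for the \emph{linear} map $\hat{T}(x_0, x) = (a_0 x_0 + a^T x,\; x)$, so that the Hessian transformation is governed by an honest linear change of coordinates on the cone; restricting to the slice $\{x_0 = 1\}$ and quotienting by the homogeneity kernel recovers the scalar factor $f^{m+2}/a_0^2$.
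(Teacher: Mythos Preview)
Your argument is correct and works for arbitrary $m$; the paper, by contrast, only gives a brute-force verification for $m=2$ (expanding the entries $\tu_{,ij}$ explicitly and then computing the $2\times 2$ determinant), deferring the general case to the reference \cite{apostolov_cr_2020}. Your route is genuinely different and considerably more efficient: rather than computing individual Hessian entries, you isolate the rank-one structure of the Jacobian, use Sherman--Morrison to get $J^{-1}=fQ$ with $Q=I+\tfrac{1}{a_0}xa^T$, and then observe that the first-derivative simplification $\partial_{\tx_i}\tu = u_i + \tfrac{a_i}{a_0}(x\cdot\nabla u - u)$ forces the second derivative to assemble into the congruence $\tGG = Q^T\GG J^{-1} = f\,Q^T\GG Q$. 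The determinant then drops out of $\det Q = f/a_0$. The cone interpretation you sketch at the end (lifting $u$ and $\tu$ to degree-one homogeneous functions related by a \emph{linear} map) is exactly the geometric picture underlying \cite{apostolov_cr_2020}, and explains why the congruence structure had to appear. One small quibble: the parenthetical ``because $f$ is affine, so $\nabla u$ enters only through the combination $\GG x$'' is a slightly misleading explanation for the cancellation; what is really happening is the elementary identity $\partial_l(x\cdot\nabla u - u)=\sum_k x_k u_{kl}$, which persists under the chain rule. The computation itself is fine.
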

\begin{proof} This follows from \cite[Lemma 5]{apostolov_cr_2020}. For the sake of completeness, we present here a direct argument in the case $m=2$.

Let $f(x)=a_0+a_1x_1+a_2x_2$. We have $x_i=\frac{\tx_i}{\tf(\tx)}$ where $\tf(\tx)=\frac{1}{f(x)}$. Then we obtain:
\begin{align} 
\begin{split} \label{xtotx}
\tpartial_k(x_j)=&\frac{\partial x_j}{\partial\tx_k}=\frac{\partial}{\partial \tx_k}\left(\frac{\tx_j}{\tf(\tx)}\right)\\
=&\frac{\delta_{kj}\tf(\tx)-\ta_k\tx_j}{\tf^2(\tx)}\\=&f(x)\left(\delta_{kj}f(x)+\frac{a_k}{a_0}x_j\right),
\end{split}
\end{align}
 where $k,j=1,2$. Also, we observe that: 
\begin{align}
\begin{split} \label{tpartials}
\tpartial_{ij}=&\frac{\partial^2}{\partial\tx_i\partial\tx_j}\\
=&\frac{\partial}{\partial\tx_i}\left( \frac{\partial x_1}{\partial\tx_j}\frac{\partial}{\partial x_1}+\frac{\partial x_2}{\partial\tx_j}\frac{\partial}{\partial x_2}\right) \\
=& \left(\frac{\partial x_1}{\partial\tx_i}\frac{\partial}{\partial x_1}+\frac{\partial x_2}{\partial\tx_i}\frac{\partial}{\partial x_2}\right)\circ\left( \frac{\partial x_1}{\partial\tx_j}\frac{\partial}{\partial x_1}+\frac{\partial x_2}{\partial\tx_j}\frac{\partial}{\partial x_2}\right).
\end{split}
\end{align}
Now, using (\ref{xtotx}) and (\ref{tpartials}) we obtain the following formulas for $\tu_{,ij}(\tx)=\tpartial_{ij}\tu(\tx)$:

\begin{align}
\begin{split} \label{tuhessian}
\tu_{,11}(\tx)=& \frac{f(x)}{a_0^2}\left( \left((a_1x_1+a_0)^2\frac{\partial^2 u(x)}{\partial x_1^2}+2a_1x_2\left((a_1x_2+a_0) \frac{\partial^2 u(x)}{\partial x_1\partial x_2}+\frac{a_1}{2}x_2\frac{\partial^2 u(x)}{\partial x_2^2} \right)\right) \right)\\
\tu_{,12}(\tx)=& \frac{f(x)}{a_0^2}\left( (a_0f(x)+2a_1a_2x_1x_2)\frac{\partial^2 u(x)}{\partial x_1\partial x_2}+(a_1x_1+a_0)a_2x_1\frac{\partial^2 u(x)}{\partial x_1^2}+ (a_2x_2+a_0)a_1x_2\frac{\partial^2 u(x)}{\partial x_2^2}\right)\\
\tu_{,22}(\tx)=& \frac{f(x)}{a_0^2}\left( \left((a_2x_2+a_0)^2\frac{\partial^2 u(x)}{\partial x_2^2}+2a_2x_1\left((a_2x_1+a_0) \frac{\partial^2 u(x)}{\partial x_1\partial x_2}+\frac{a_2}{2}x_1\frac{\partial^2 u(x)}{\partial x_1^2} \right)\right) \right)\\
\end{split}
\end{align}

Finally, straight forward computation of $\det(\tGG)=\det(Hess(\tu))$ using (\ref{tuhessian}) yields
\begin{equation}\label{dettG}
\det(\tGG)=\frac{f^4(x)}{a_0^2}\det \GG.
\end{equation}
\end{proof}

\begin{proof}[Proof of Lemma \ref{compactificationlemma}]
To prove Lemma \ref{compactificationlemma} we shall check the equivalent conditions for $\tu\in\tsymppot$ given by Theorem \ref{abreuthm}. In order to use Theorem \ref{abreuthm}, the first step is to check that $\tu(\tx)=\tu_{\tpolytope} (\tx)+\tphi(\tx)$, where $\tu_{\tpolytope}$ is the Guillemin potential of $(\tpolytope,\tL)$ and $\tphi$ is a smooth function on $\tpolytope$. Since $u\in\symppot$, we have $u(x)=u_{\polytope}(x)+h(x)$ where $u_{\polytope}$ is the Guillemin potential of $(\polytope, \L)$ and $h$ is a smooth function on the whole of $\polytope$. Now, using that $\tu(\tx)=\frac{u(x)}{f(x)}$ we can write $$\tu(\tx)=\tu_{\tpolytope}(\tx)+\tphi(\tx),$$ where $\tu_{\tpolytope}$ is the Guillemin potential of $(\tpolytope, \tL)$ and $\tphi(\tx)=h\left(\frac{\tx}{\tf}\right)\tf-\log(\tf)\left(\sum_r\tLL_r\right)$. The smoothness of $\tphi$ on $\tpolytope$ follows from the smoothness of $h$ on $\polytope$ and the positivity of $\tf$ on $\tpolytope$.

The second step is to check the positivity of $\tGG$ in $\tpolytope^0$ and its behaviour on $\partial\tpolytope$. The positivity of $\tGG$ on $\tpolytope^o$ follows from \cite[Theorem 1]{apostolov_cr_2020} and \cite[Lemma 5]{apostolov_cr_2020} (which identifies $\tu$ with the simplectic potential of a K\"ahler metric over $\tpolytope^o\times\G^m$).

To check the behaviour of $\det(\tGG)$ on $\partial\tpolytope$ we need to show that $$\det(\tGG)=\frac{\tdelta(x)}{\Pi_r\tLL_r(x)},$$ with $\tdelta$ being a smooth and strictly positive function on the whole $\tpolytope$. This follows from Lemma \ref{projhessian}. Indeed, since $u\in\symppot$ according to Theorem \ref{abreuthm} 
$$det(\GG)=\frac{\delta(x)}{\Pi_{r=1}^dL_r(x)}$$
Using (\ref{dettG}) we obtain,
\begin{align} \label{abreucondsat}
\begin{split}
\det\tGG=&\frac{(f(x))^{m+2}}{a_0^2}\frac{\delta(x)}{\Pi_{r=1}^dL_r(x)}\\
=& \frac{\delta(x)}{a_0^2(f(x))^{d-(m+2)}}\frac{1}{\Pi_{r=1}^dL_r(x)}\\
=& \frac{\tdelta(\tx)}{\Pi_{r=1}^d\tLL_r(x)},
\end{split}
\end{align}
where $\tdelta(\tx)=\frac{1}{a_0^2}\delta\left(\frac{\tx}{\tf}\right)(\tf(\tx))^{d-(m+2)}$ is a positive function on $\tpolytope$.
\end{proof}

\begin{definition}[see \cite{apostolov_cr_2020} p.16 and Lemma 5]  \label{ftwistmetric}
For a toric K\"ahler metric $g$ over ${M^0\cong\polytope^0\times\G^n}$ given in moment-angle coordinates by (\ref{momentanglecoords}), and an affine linear function $f(x)=a_0+\sum_{i=1}^na_ix_i$ positive on $\polytope$ with $a_0>0$, we define the \emph{$f$-twist transform of $g$} to be the toric K\"ahler metric $\tg$ over $\tpolytope\times\G^n$ given by 
\begin{equation}\label{tmomentanglecoords}
    \begin{array}{lcl}
    \tg=\langle d\tx,\tGG(\tx),d\tx\rangle+\langle d\ttt,\tHH(\tx),d\ttt\rangle, & & \tJ d\ttt=-\langle \tGG(\tx),d\tx\rangle, \\
    \tomega=\langle d\tx\wedge d\ttt\rangle, & & \tJ d\tx=-\langle \tHH(\tx),d\ttt\rangle,
    \end{array}
\end{equation}
with
\begin{equation*}
   \tilde{t}_j=t_j-\frac{a_j}{a_0}t_0, \phantom{aa} j\in{1,\ldots,n}, \phantom{aa} \textrm{and}\phantom{aa}\tu(\tx)=\frac{u(x)}{f(x)}. 
\end{equation*}
\end{definition}

\begin{theorem}[{\cite[Theorem 1, Lemma 5]{apostolov_cr_2020}}]
$(\tg,\tJ)$ is extremal if and only if $(g,J,f)$ is $(f,m+2)$-extremal.
\end{theorem}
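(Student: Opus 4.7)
My approach is to reduce the equivalence to a single computational identity that transforms Abreu's scalar curvature formula for the twisted symplectic potential $\tu$ into the weighted divergence expression applied to $u$. Once this identity is in place, the equivalence of the two notions of extremality follows from Lemma~\ref{afflinlemma} together with a short uniqueness check identifying the two extremal affine linear functions under the $f$-twist correspondence.

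The first step is to push the calculation of Lemma~\ref{projhessian} one level further and compute the transformed inverse Hessian $\tHH^{\tu}=\tGG^{-1}$ entrywise in terms of $\HH^u=\GG^{-1}$ and the coefficients $a_0,\dots,a_m$ of $f$. The determinantal identity $\det(\tGG)=\frac{f^{m+2}}{a_0^2}\det(\GG)$ already signals that a factor $f^{m+2}$ will appear; the entrywise matrix formula for $\tHH^{\tu}$ is obtained by inverting the explicit expression for $\tGG$ from (\ref{xtotx})--(\ref{tuhessian}). Combining this with the dual chain rule formula
\begin{equation*}
\tpartial_i=f(x)\sum_{j=1}^m\Bigl(\delta_{ij}f(x)+\tfrac{a_i}{a_0}x_j\Bigr)\partial_j,
\end{equation*}
implicit in (\ref{xtotx}), and differentiating twice, I aim to establish the key identity
\begin{equation*}
f(x)\cdot S(\tu)(\tx)=-f^{m+3}(x)\sum_{i,j=1}^m\left(\frac{H^u_{ij}(x)}{f^{m+1}(x)}\right)_{,ij},
\end{equation*}
so that $f$ times the twisted scalar curvature of $\tu$ is precisely the $(f,m+2)$-weighted divergence of $\HH^u$ appearing on the left-hand side of (\ref{wextscal}).

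Given this identity, the rest is formal. By Lemma~\ref{afflinlemma}, an affine linear function on $\tpolytope$ in the variable $\tx$ corresponds bijectively to a function of the form $\phi(x)/f(x)$ with $\phi(x)$ affine linear on $\polytope$. Hence $S(\tu)$ is affine linear on $\tpolytope$ iff $f\cdot S(\tu)=\phi$ with $\phi$ affine linear on $\polytope$, which by the key identity is iff $u$ satisfies (\ref{wextscal}) with $w=m+2$ and right hand side $\phi$. The fact that such a $\phi$ must coincide with $\zeta_{(\polytope,\L,f,m+2)}$ follows from the uniqueness of the latter as the affine linear function for which the Donaldson--Futaki invariant $\wfDonFutinv$ vanishes on all affine linear test functions; one checks by integration by parts on $\polytope$, using the boundary conditions (\ref{toriccond1}) together with the change of variables $T(x)=\tx$, that the analogous characterization of $\extt$ on the twisted side transports into precisely this linear system under the correspondence $\phi\leftrightarrow \phi/f$. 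This yields the equivalence in both directions.

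The main obstacle will be the bookkeeping in the derivation of the key identity. Even inverting the Hessian transformation already produces $O(m^2)$ terms, and applying the chain rule twice magnifies this substantially, so isolating the cancellations that yield the clean weight $w=m+2$ (rather than a neighbouring power of $f$) is delicate. A conceptually cleaner route, taken in \cite{apostolov_cr_2020}, realises $g$ and $\tg$ as transversal Kähler structures on a common Sasaki manifold associated to two commuting Reeb vector fields; from this viewpoint the identity becomes an instance of the standard transformation of scalar curvature under a conformal scaling of the Reeb direction. That route, however, requires importing the CR/Sasaki formalism, so for a fully self-contained argument the direct chain-rule computation sketched above is unavoidable.
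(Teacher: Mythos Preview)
The paper does not supply its own proof of this theorem; it is quoted verbatim from \cite[Theorem~1, Lemma~5]{apostolov_cr_2020} and used as a black box. What the paper \emph{does} prove is the integrated counterpart, Proposition~\ref{keyremark}, and it does so not via any pointwise scalar-curvature identity but by a direct change of variables on the polytope: the pushforward formulas (\ref{measuresrelation}) for $dx$ and $d\sigma$ under $T$ convert the weighted Donaldson--Futaki integral $\wfDonFutinv(\phi)$ into $\frac{1}{f(0)}\tDonFutinv(\tphi)$, and the identification of extremal affine functions then drops out of the defining linear system.

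Your route is genuinely different and more ambitious: you aim for the pointwise identity $f\cdot S(\tu)=-f^{m+3}\sum_{i,j}(f^{-(m+1)}H^u_{ij})_{,ij}$, from which both the theorem and Proposition~\ref{keyremark} would follow (the latter by integrating against $\phi/f^{m+3}$ and using (\ref{toriccond1}) with (\ref{measuresrelation})). This identity is correct and is exactly the local content of the cited result, but as you yourself acknowledge, establishing it by brute-force chain rule from (\ref{xtotx})--(\ref{tuhessian}) is heavy going, and your sketch does not carry it out. The cited reference sidesteps this by realising both metrics as transversal K\"ahler structures of commuting Sasaki--Reeb fields on a common CR cone, where the identity reduces to a known transformation law; the paper under review simply imports that conclusion. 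So your proposal is sound in principle, but neither matches the paper's (nonexistent) argument nor completes the computation it promises.
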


We complete the above observation with the following

\begin{proposition} \label{keyremark} Let $(\polytope,\L)$ be a simple compact convex labelled polytope in $\Rm$ which contains the origin, and $f(x)$ an affine linear function which is positive on $\polytope$. Consider $(\tpolytope,\tL)$ to be the $f$-twist transform of $(\polytope,\L)$. Then,
$$\wfDonFutinv(\phi)=\frac{1}{f(0)}\tDonFutinv(\tilde{\phi}),$$
where $\frac{\fext(x)}{f(x)}=\extt(\tx)$ and $\tphi(\tx)=\frac{\phi(x)}{f(x)}$.
\end{proposition}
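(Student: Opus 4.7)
The plan is to reduce the identity to two Jacobian formulas for the $f$-twist change of variables $\tx=x/f(x)$: one for the Euclidean volume on $\polytope$ and one for each facet measure on $\partial\polytope$. Granted these, substituting $\tphi=\phi/f$ and the hypothesis $\extt(\tx)=\fext(x)/f(x)$ into the defining expression
\begin{equation*}
\tDonFutinv(\tphi)=2\int_{\partial\tpolytope}\tphi\,d\tsigma-\int_{\tpolytope}\tphi\cdot\extt\,d\tx
\end{equation*}
will produce an overall factor $f(0)$ in front of the integrands appearing in Definition \ref{definvfutaki} with weight $w=m+2$, which is the claim.

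To compute the volume Jacobian I would differentiate $\tx_i=x_i/f(x)$ to get $\partial\tx_i/\partial x_j=(\delta_{ij}f-a_jx_i)/f^2$, so the Jacobian matrix is the rank-one perturbation $\tfrac{1}{f}I-\tfrac{1}{f^2}x\,a^T$. The identity $\det(I+uv^T)=1+v^Tu$ then gives $\det(\partial\tx/\partial x)=\tfrac{1}{f^m}\bigl(1-a^Tx/f\bigr)$. Since $a^Tx=f(x)-a_0$ and $a_0=f(0)$, this simplifies to $f(0)/f^{m+1}$, hence $d\tx=\tfrac{f(0)}{f^{m+1}}\,dx$.

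For the facet measure I would use that on $F_i=\{L_i=0\}$ one has $d\tLL_i=d(L_i/f)=\tfrac{1}{f}dL_i-\tfrac{L_i}{f^2}df$, which restricts to $\tfrac{1}{f}dL_i$ on $F_i$. Combining the defining wedge relations $dL_i\wedge d\sigma=-dx$ and $d\tLL_i\wedge d\tsigma=-d\tx$ with the volume Jacobian above, the common factor $dL_i$ cancels and one reads off $d\tsigma=\tfrac{f(0)}{f^m}\,d\sigma$ on $F_i$.

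Assembling the pieces, $\int_{\partial\tpolytope}\tphi\,d\tsigma=f(0)\int_{\partial\polytope}\phi/f^{m+1}\,d\sigma$ and $\int_{\tpolytope}\tphi\cdot\extt\,d\tx=f(0)\int_{\polytope}\phi\cdot\fext/f^{m+3}\,dx$. Factoring out $f(0)$ and comparing with Definition \ref{definvfutaki} at $w=m+2$ yields $\tDonFutinv(\tphi)=f(0)\,\wfDonFutinv(\phi)$, which rearranges to the stated identity. The only mildly technical step is the facet-measure computation; the rest is bookkeeping, and the main obstacle is simply keeping the various powers of $f$ straight throughout.
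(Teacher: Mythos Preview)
Your proof is correct and follows essentially the same route as the paper: both arguments rest on the two measure-transformation identities $d\tx=\tfrac{f(0)}{f^{m+1}}\,dx$ and $d\tsigma=\tfrac{f(0)}{f^{m}}\,d\sigma$, then substitute into the defining integrals. The paper simply records these identities and performs the substitution, whereas you supply the explicit Jacobian computation via the matrix determinant lemma and the restriction $d\tLL_i=\tfrac{1}{f}dL_i$ on $F_i$; this extra detail is welcome but does not change the structure of the argument.
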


\begin{proof}
Let $T:\polytope\rightarrow\tpolytope$ be the diffeomorphism given by $\tx:=T(x)=\frac{x}{f(x)}$. We consider the Lebesgue measure $dx=dx_1\wedge~\ldots\wedge dx_m$ on $\polytope$ and the induced measures $d\sigma$ on each facet $F_i\subset\partial\polytope$ defined by letting $dL_i\wedge d\sigma=-dx$. In the same way we define $d\tx$ on $\tpolytope$ and $d\tsigma$ on $\tF_i\subset\tpolytope$, respectively.

We observe that:
\begin{equation} \label{measuresrelation}
\begin{aligned}
\T^{\ast}(dx)&=\frac{f(0)}{(\tf(\tx))^{m+1}}d\tx \\
\T^{\ast}(d\sigma)&=\frac{f(0)}{(\tf(\tx))^m}d\tsigma.
\end{aligned}
\end{equation}

Using (\ref{twistpullbackrelation}) and (\ref{measuresrelation}) the result follows. Letting $\tphi(\tx)=\frac{\phi(x)}{f(x)}$, we obtain:
\begin{equation}\label{tscal}
\begin{aligned}
\wfDonFutinv(\phi)=&2\int_{\partial\polytope}\frac{\phi}{f^{m+1}}d\sigma-\int_{\polytope}\frac{\phi}{f^{m+3}}\fext dx\\
  =&\frac{1}{f(0)}\left(2\int_{\partial\polytope}\frac{\phi}{f}\frac{f(0)}{f^m} d\sigma-\int_{\polytope}\frac{\phi}{f}\frac{\fext}{f} \frac{f(0)}{f^{m+1}} dx\right)\\
  =&\frac{1}{f(0)} \left(2\int_{\T(\partial\polytope)}\T^\ast\left(\frac{\phi}{f}\right)\frac{f(0)}{(\T^\ast f)^m} \T^\ast(d\sigma)\right. \\
  &-\left.\int_{\T(\polytope)}\T^\ast\left(\frac{\phi}{f}\right)\T^\ast\left(\frac{\fext}{f}\right) \frac{f(0)}{(\T^\ast f)^{m+1}} \T^\ast(dx)\right)  \\
  =&\frac{1}{f(0)}\left(2\int_{\partial\tpolytope}\tilde{\phi} d\tsigma-\int_{\tpolytope}\tilde{\phi}\frac{\fext}{f}d\tx\right)\\
  =&\frac{1}{f(0)}\tDonFutinv(\tilde{\phi}).
\end{aligned}
\end{equation}

\end{proof}

\begin{corollary} \label{keycorollary}
Let $(\polytope,\L)$ be a labelled compact convex simple polytope in $\Rm$ containing the origin, $f$ a positive affine linear function on $\polytope$, and $(\tpolytope,\tL)$ the $f$-twist of $(\polytope,\L)$. Then, $(\polytope,\L)$ is $(f,m+2)$-$K$-stable if and only if $(\tpolytope,\tL)$ is $K$-stable.
\end{corollary}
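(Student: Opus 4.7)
The plan is to combine the scalar identity from Proposition \ref{keyremark} with two ``bookkeeping'' facts: (i) the $f$-twist maps affine linear functions bijectively to affine linear functions, and (ii) it maps convex piecewise affine linear functions bijectively to convex piecewise affine linear functions.

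First, since $0\in\polytope$ and $f>0$ on $\polytope$, we have $f(0)>0$, so Proposition~\ref{keyremark} gives
\[
\wfDonFutinv(\phi)\;=\;\frac{1}{f(0)}\,\tDonFutinv(\tphi),
\]
and consequently $\wfDonFutinv(\phi)$ and $\tDonFutinv(\tphi)$ have the same sign, with the same vanishing locus, as $\phi$ ranges over functions on $\polytope$ and $\tphi=\phi/f$ ranges over the corresponding functions on $\tpolytope$.

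Next I would verify the bijection. For affine linear functions this is exactly Lemma~\ref{afflinlemma} in one direction; the reverse direction follows from the involutive nature of the twist: the $\tf$-twist of $(\tpolytope,\tL)$, where $\tf(\tx)=1/f(x)$, carries $\tx$ to $\tx/\tf(\tx)=x$, recovering $(\polytope,\L)$, and applying Lemma~\ref{afflinlemma} again to $\tf$ on $\tpolytope$ shows the inverse twist sends affine linear functions back to affine linear functions. Hence $\phi$ is affine linear on $\polytope$ if and only if $\tphi$ is affine linear on $\tpolytope$.

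For convex piecewise affine linear functions, I use the characterization $\phi=\max_{i=1}^{N}\phi_i$ for affine linear functions $\phi_1,\ldots,\phi_N$. Since $f>0$ on $\polytope$,
\[
\frac{\phi(x)}{f(x)}\;=\;\max_{i=1}^N\,\frac{\phi_i(x)}{f(x)},
\]
so, pulling back by $T^{-1}$, $\tphi(\tx)=\max_i \tphi_i(\tx)$; each $\tphi_i$ is affine linear in $\tx$ by Lemma~\ref{afflinlemma}, and a maximum of finitely many affine linear functions is convex piecewise affine linear on $\tpolytope$. The involutive argument above gives the reverse direction. Together with the previous paragraph, $\tphi$ is affine linear iff $\phi$ is affine linear.

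Finally, combining these facts: $(\polytope,\L)$ is $(f,m+2)$-$K$-stable, i.e., $\wfDonFutinv(\phi)\geq 0$ on all convex piecewise affine linear $\phi$ with equality exactly on affine linear $\phi$, if and only if, by the identity above, $\tDonFutinv(\tpsi)\geq 0$ on all convex piecewise affine linear $\tpsi$ on $\tpolytope$ with equality exactly on affine linear $\tpsi$; that is, $(\tpolytope,\tL)$ is $K$-stable. The only potential subtlety, and the step I would scrutinize carefully, is the claim that the twist preserves convexity of piecewise affine linear functions; the argument above handles this by writing such functions as a finite maximum of affine linears and using positivity of $f$.
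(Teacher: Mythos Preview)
Your argument is correct and is essentially the approach the paper intends: the paper states the corollary immediately after Proposition~\ref{keyremark} without giving a separate proof, treating it as a direct consequence of the identity $\wfDonFutinv(\phi)=\frac{1}{f(0)}\tDonFutinv(\tphi)$. You have simply made explicit the two bookkeeping facts (bijection on affine linear functions via Lemma~\ref{afflinlemma} and the involutive nature of the twist, and bijection on convex piecewise affine linear functions via the $\max$-representation and positivity of $f$) that the paper leaves implicit; these are indeed the points one must check, and your treatment of them is sound.
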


\section{Proof of the Main Result}

\subsection{Known Results on Hirzebruch Surfaces} \label{knownres}

Denote by $\mathbb{F}_k$ the $k$-th Hirzebruch complex surface, $\mathbb{F}_k=\mathbb{P}\left(\mathcal{O}\oplus\mathcal{O}(k)\right)\xrightarrow[]{\pi} \mathbb{CP}^1$ for $k\geq1$,
and by $\polytopepk$ the Delzant polytope of the $k$-th Hirzebruch surface $\mathbb{F}_k$ endowed with a $\G^2$-invariant K\"ahler metric in the K\"ahler class $\Omega_p=\mathcal{L}-(1-p)\mathcal{E}$, where $\mathcal{L}$ and $\mathcal{E}$ are respectively the Poincaré duals of a projective line and the infinity section of $\mathbb{F}_k$ (see \cite{gauduchon2009hirzebruch}). It can be shown that the correponding Delzant polytope $\polytopepk$ is the convex hull of $(0,0)$, $(p,0)$, $(p,(1-p)k)$, $(0,k)$, $(0<p<1)$, and labelling $\L_{p,k}=\{L_1,\ldots,L_4\}$ where $e_1=(1,0), e_2=(0,1), e_3=-e_1, e_4=-(ke_1+e_2)$, $\left\langle e_1, x\right\rangle=x_1$, $\left\langle e_2,x\right\rangle=x_2$, and $L_1(x)=\left\langle e_1,x\right\rangle=x_1, L_2(x)=\left\langle e_2,x\right\rangle=x_2, L_3(x)=\left\langle e_3,x\right\rangle+p=-x_1+p, L_4(x)=\left\langle e_4,x\right\rangle+k=k(1-x_1)-x_2$.

In \cite{futaki2018volume}, the authors computed the critical points of the volume functional which characterizes the possible positive affine linear functions $f$ on $(\polytopepk,\Lpk)$  satisfying condition $(a)$ in (\ref{abconditions}), and found out the following

\begin{theorem}[\cite{futaki2018volume, lebrun2016einstein}] \label{foclassfk}
Let $M=\F_k$ be the $k$-th Hirzebruch surface considered as a toric manifold classified by $(\polytopepk, \Lpk)$. Let $0<r_k<s_k<1$ be the real roots of $$F_k(p)=4(1-p)^2k^2-4(p-1)(p-2)pk+p^4.$$

\begin{itemize}
\item[(i)] For any $k$ and $0<p<1$, the affine linear function 
\begin{equation} \label{solution1fk}
f_p=\frac{p+2\sqrt{1-p}-2}{2p^2}x_1-\frac{\sqrt{1-p}-1}{2p}
\end{equation}
is positive on $\polytopepk$ and $(\polytopepk,\Lpk,f_p)$ satisfy the conditions $(a)$ and $(b)$ in (\ref{abconditions}). 

\item[(ii)] For $k=1$ and for $\frac{8}{9}<p<1$, the two affine linear functions 
\begin{equation} \label{solution2fk}
f_p^\pm=\frac{-p\pm\sqrt{9p^2-8p}}{4p^2}x_1+\frac{3}{8}\mp\frac{\sqrt{9p^2-8p}}{8p},
\end{equation}
 are positive on $\polytopepk$ and $(\polytopepk,\L_{p,k},f_p^\pm)$ satisfy the conditions $(a)$ and $(b)$ in (\ref{abconditions}).

\item[(iii)] For $k=1,2,3,4$ and $0<p<r_k$,  let
\begin{displaymath}
a^\pm_{p,k}=\frac{\pm\sqrt{F_k(p)}+2(p-1)k-p(p-2)}{2((2(p-1)(p-2)k-p^3))},
\end{displaymath} 
\begin{displaymath}
b^\pm_{p,k}=\pm\frac{\sqrt{F_k(p)}}{k(2(p-1)(p-2)k-p^3)},
\end{displaymath}
\begin{displaymath}
c^\pm_{p,k}=\frac{1}{4}(1+(p-2)kb^\pm_{p,k}-2pa^\pm_{p,k}),
\end{displaymath}
and consider the two affine linear functions 
\begin{equation} \label{solutions3et4fk}
\fpmpk:= a^\pm_{p,k}x_1+b^\pm_{p,k}x_2+c^\pm_{p,k}.
\end{equation}
The functions $\fpmpk$ are positive on $\polytopepk$ and satisfy the condition $(a)$.
\end{itemize}
\end{theorem}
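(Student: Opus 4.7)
The strategy is to reduce condition (a) in (\ref{abconditions}) to an explicit polynomial system in the coefficients of $f$ and solve this system case by case. By Definition \ref{definvfutaki} with $w = 2m = 4$ and the characterization $\fext \equiv c$, condition (a) is equivalent to
\begin{equation*}
c\int_{\polytopepk}\frac{\phi}{f^{5}}\,dx \;=\; 2\int_{\partial \polytopepk}\frac{\phi}{f^{3}}\,d\sigma \qquad \text{for all affine linear } \phi,
\end{equation*}
which yields three scalar equations when tested against $\phi \in \{1, x_1, x_2\}$. The first merely recovers the formula (\ref{ckemconstant}) for $c$, so (a) is equivalent to two polynomial equations in the coefficients $(a_0, a_1, a_2)$ of $f = a_0 + a_1 x_1 + a_2 x_2$, modulo rescaling.

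All integrals involved are elementary: over the trapezoid with vertices $(0,0), (p,0), (p,(1-p)k), (0,k)$ and its four facets, the integrands $x^\alpha/f^n$ with $f$ affine linear can be computed by elementary substitution, producing explicit rational functions of $(a_0, a_1, a_2, p, k)$. After clearing denominators, the two defining equations should split into two natural branches according to whether $a_2 = 0$ (i.e.\ $f$ is invariant under the circle action rotating the fibres of $\F_k \to \CP^1$) or $a_2 \neq 0$.

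The $a_2 = 0$ branch reduces to a polynomial equation in the single ratio $t = a_1/a_0$. For every $k \geq 1$ and $p \in (0,1)$ there is a unique root yielding (\ref{solution1fk}), which corresponds to LeBrun's Calabi-type solution of \cite{lebrun2016einstein}; only in the special case $k=1$ does an additional factor appear, whose real roots exist precisely when $9p^2 - 8p > 0$, i.e.\ $p > 8/9$, producing the pair (\ref{solution2fk}). For the $a_2 \neq 0$ branch, I would eliminate $a_0$ between the two equations to reduce the system to a single quadratic whose discriminant is a nonzero multiple of $F_k(p) = 4(1-p)^2 k^2 - 4(p-1)(p-2)pk + p^4$; real roots then exist exactly where $F_k(p) \geq 0$, and explicit extraction yields the formulas for $(a_{p,k}^{\pm}, b_{p,k}^{\pm}, c_{p,k}^{\pm})$ in (\ref{solutions3et4fk}). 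A direct inequality check of $f$ at the four vertices of $\polytopepk$ then fixes the parameter ranges stated in (i)--(iii).

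The principal obstacle is the sheer bookkeeping of the polynomial algebra: the six boundary and interior integrals are bulky, and cleanly factoring the resulting system so as to expose the two branches requires some care, especially to ensure that the exotic $k=1$ factor in the $a_2 = 0$ branch and the $F_k(p)$ discriminant in the $a_2 \neq 0$ branch arise naturally rather than being imposed by hand. A parallel route that substantially streamlines the computation is the volume-functional framework of Futaki--Ono \cite{futaki2018volume}, in which condition (a) is recast as the stationarity of a normalized volume $V(K,a)$ on $\mathcal{P}^{\G}_{\Omega_p}$, yielding an equivalent but more symmetrically organized Euler--Lagrange system; this is in fact the approach used in \cite{futaki2018volume} to derive (ii) and (iii), while (i) recovers the solutions of \cite{lebrun2016einstein}.
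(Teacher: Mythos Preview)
The paper does not supply its own proof of this theorem: it is stated as a result imported from \cite{futaki2018volume} and \cite{lebrun2016einstein}, with Remark~\ref{remarkfk} explaining the attribution. Your outline for condition~(a) is consistent with what is done in \cite{futaki2018volume}, and you correctly identify the volume-functional reformulation as the efficient route; in that sense there is nothing to compare on the (a)-side beyond noting that the paper simply cites the computation rather than reproducing it.

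There is, however, a genuine omission in your proposal. Parts (i) and (ii) assert not only condition~(a) but also condition~(b), and your sketch never touches~(b). The paper (via Remark~\ref{remarkfk}) makes clear that~(b) for the families (\ref{solution1fk}) and (\ref{solution2fk}) is \emph{not} obtained by any direct stability computation: rather, \cite{futaki2018volume} identifies these affine linear functions with the Killing potentials of the explicit cKEM metrics constructed by LeBrun in \cite[Theorems~B and~D]{lebrun2016einstein}, and then Theorem~\ref{am19thm} (existence of a solution implies $(f,2m)$-$K$-stability) yields~(b) as a consequence. Your integral/polynomial machinery, however carefully executed, will only ever deliver~(a) and positivity; to close parts (i) and (ii) you must invoke the existence of LeBrun's metrics and Theorem~\ref{am19thm}. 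Without that step the proof of (i) and (ii) is incomplete.
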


\begin{remark} \label{remarkfk}
 In \cite{futaki2018volume} the authors showed that the families (\ref{solution1fk}) and (\ref{solution2fk}) of affine linear functions satisfying condition $(a)$ correspond to the Killing potentials of the cKEM metrics constructed in \cite[Theorem D]{lebrun2016einstein} and \cite[Theorem B]{lebrun2016einstein} respectively. Combined with Theorem \ref{am19thm} above, it follows that these families also satisfy the condition $(b)$.
\end{remark}
 
In view of Theorem \ref{foclassfk} and Remark \ref{remarkfk}, the following question arises:

\begin{question}[\cite{futaki2018volume}] \label{q0}
Does the affine linear function given by (\ref{solutions3et4fk}) in Theorem \ref{foclassfk} define a Killing potential for a toric cKEM metric?
\end{question}

In the case of $\fpmpun$ ($k=1$ in (\ref{solutions3et4fk})), numerical evidence towards a positive answer appears in \cite{futaki2019conformally}. Theorem~\ref{thm2} answers Question \ref{q0} in affirmative.

\subsection{Proof of Theorem~\ref{thm2}}

In view of Theorem \ref{equivthm}, we observe that Question \ref{q0} reduces to verify whether or not $(\polytopepk,\L_{p,k},f)$ is $(f,4)$-$K$-stable, i.e. whether or not the condition $(b)$ holds true for $f$ given by (\ref{solutions3et4fk}). By Corollary \ref{keycorollary}, this in turn is equivalent to verify whether or not the $f$-twist $(\tpolytopepk, \tL_{p,k})$ of $(\polytopepk,\L_{p,k},f)$ is $K$-stable.

We are going to show that this is indeed the case, which in turn will yield the existence part of Theorem \ref{thm2}.

To this end, recall the following definition introduced in \cite{legendre2011toric}.

\begin{definition}
Let $\polytope$ be a quadrilateral with vertices $s_1,\ldots,s_4$, such that $s_1$ is not consecutive to $s_3$. We say that a function $f$ is \emph{equipoised} on $\polytope$ if $$\sum_{i=1}^{4}(-1)^if(s_i)=0.$$

A labelled polytope $(\polytope,\L)$ is called \emph{equipoised} if its extremal affine function $\ext$, introduced by (\ref{abreueq}), is equipoised on $\polytope$.
\end{definition}

\begin{theorem}[\cite{Apostolov2015, legendre2011toric}] \label{equipoisedresult}
If $(\polytope,\L)$ is an equipoised labelled compact convex quadrilateral then it is $K$-stable and the Abreu equation (\ref{abreueq}) admits a solution $u\in\symppot$. Furthermore,  the extremal K\"ahler metric corresponding to $\tu$ is either a product, or Calabi-type or an orthotoric metric.
\end{theorem}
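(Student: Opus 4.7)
The strategy is to exploit the fact, established in \cite{Apostolov2015, legendre2011toric}, that equipoised labelled quadrilaterals are precisely the moment polytopes of \emph{ambitoric} toric K\"ahler $4$-orbifolds. I would first partition the class of equipoised quadrilaterals according to their affine geometry into three cases: (i) parallelograms/rectangles, leading to product ambitoric metrics; (ii) trapezoids with exactly one pair of parallel sides, leading to parabolic ambitoric metrics of Calabi type; and (iii) non-trapezoidal equipoised quadrilaterals, leading to hyperbolic or elliptic ambitoric metrics of orthotoric type. The equipoised identity $\sum_{i=1}^{4}(-1)^i\ext(s_i)=0$ is exactly the algebraic compatibility condition that allows $(\polytope,\L)$ to be realized as the image of a separable ambitoric moment map; in each of (i)--(iii), an explicit affine change of coordinates brings $(\polytope,\L)$ into a normal form adapted to the corresponding separation of variables.

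For the existence part, I would write down the ambitoric ansatz for the symplectic potential in the form suggested by separation of variables (schematically $u=A(y_1)+B(y_2)$ after an affine change of coordinates, plus the standard Guillemin $L_r\log L_r$ terms). Under this ansatz the Abreu equation (\ref{abreueq}) reduces to two explicit fourth-order ODEs in one variable, each with polynomial right-hand side determined by $\ext$. These ODEs can be integrated explicitly, and the equipoised condition is precisely what is needed so that the integration constants can be chosen to satisfy the boundary conditions (\ref{toriccond1}) simultaneously along all four facets; this produces the required $u\in\symppot$, with the corresponding metric of product, Calabi, or orthotoric type in the respective cases.

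For the $K$-stability assertion, I would invoke Legendre's reduction \cite{legendre2011toric} of the $K$-stability of a labelled compact convex quadrilateral to the nonnegativity of two explicit polynomials of degree at most four on prescribed intervals, determined by $(\polytope,\L)$ and $\ext$. For equipoised quadrilaterals $\ext$ takes a particularly simple form and, following \cite[Example~1]{apostolov_cr_2020}, these two polynomials factor in a way that makes their nonnegativity manifest and strict vanishing occurs only on affine functions. Alternatively, once the extremal metric has been constructed via the ambitoric ansatz, $K$-stability can be deduced \emph{a posteriori} from the implication ``existence $\Rightarrow$ $K$-stability'' in the toric surface case. The main obstacle is the bookkeeping in the non-trapezoidal (orthotoric) subcase: checking smoothness and strict positivity of the inverse Hessian of the candidate $u$ over the entire polytope (especially near vertices), and verifying uniformly across the moduli of equipoised quadrilaterals that the two Legendre polynomials remain nonnegative, in particular in the limiting regime where the quadrilateral degenerates to a trapezoid and one passes from the orthotoric to the Calabi-type family.
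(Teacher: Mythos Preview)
Your overall framework matches the paper's: split equipoised quadrilaterals into parallelograms, trapezoids (not parallelograms), and generic quadrilaterals, corresponding respectively to product, Calabi-type, and orthotoric metrics, and reduce everything to a pair of one-variable polynomials $A(\xi)$, $B(\eta)$ of degree $\le 4$ satisfying the first-order boundary conditions (\ref{boundaryAB}).

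However, there is a genuine gap. You treat ``satisfying the boundary conditions'' as sufficient to produce $u\in\symppot$, and then propose to deduce $K$-stability either by a factorization argument or a posteriori from existence. But both existence and $K$-stability hinge on the \emph{same} open issue: the strict positivity of $A$ and $B$ on the open intervals $(\alpha_1,\alpha_2)$ and $(\beta_1,\beta_2)$. Legendre's theorem says precisely that $(\polytope,\L)$ is $K$-stable \emph{iff} the extremal pair $(A,B)$ is positive there, and the candidate $u$ lies in $\symppot$ \emph{iff} the same positivity holds. So your ``existence $\Rightarrow$ $K$-stability'' route is circular, and your factorization claim is only substantiated (via \cite[Example~1]{Apostolov2015}) in the orthotoric case.

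You also misidentify the hard case. The orthotoric (non-trapezoidal) case is already in the literature: the extremal condition forces $\deg(A+B)\le 1$, so $A''=-B''$ is quadratic, and a simple root-count together with the boundary signs $A'(\alpha_1)>0>A'(\alpha_2)$, $B'(\beta_1)>0>B'(\beta_2)$ forces positivity. The case that actually requires a new argument is the \emph{Calabi-type trapezoid}: here $B$ has degree $2$ and the extremal relation is $B''=-A''(0)$. The paper's proof (communicated by Legendre) runs as follows: the boundary conditions force $B>0$ on $(\beta_1,\beta_2)$, hence $B''<0$, hence $A''(0)>0$; if $A$ failed to be positive on $(\alpha_1,\alpha_2)$ then $A''$ would need two roots in $(\alpha_1,\alpha_2)$, but since $0<\alpha_1$ and $A''(0)>0$ this would force a third root of the quadratic $A''$ in $(0,\alpha_1)$, a contradiction. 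Without this (or an equivalent) argument, neither the existence of $u\in\symppot$ nor the $K$-stability of $(\polytope,\L)$ is established in the trapezoid case.
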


\begin{proof}
For the sake of a self-contained presentation we sketch the proof. Following \cite{legendre2011toric}, we recall that given $(\polytope,\L)$ and $g=g_u$ defined by $u\in\symppot$, we say that
\begin{itemize}
\item[$\circ$] $g=g_u$ is of \emph{product type} if $\polytope^0$ admits \emph{product coordinates} $\xi,\eta$ such that on $M^0=\polytope^0\times\G^2$ we have
\begin{equation}\label{productmetric}
g_{|_{M^0}}=\frac{d\xi^2}{A(\xi)}+\frac{d\eta^2}{B(\eta)}+A(\xi)dt_1^2+B(\eta)dt_2^2.
\end{equation}
In this case, the momentum coordinates $x=(x_1,x_2)$ are given by $x_1=\xi$, $x_2=\eta$ and we can assume  $\textrm{Im }_{\polytope^0}\xi=(\alpha_1,\alpha_2)$  and $\textrm{Im }_{\polytope^0}\eta=(\beta_1,\beta_2)$ with $0<\beta_1<\beta_2<\alpha_1<\alpha_2$, and $A\in C^\infty([\alpha_1,\alpha_2])$ and $B\in C^\infty([\beta_1,\beta_2])$ are positive on $(\alpha_1,\alpha_2)$ and $(\beta_1,\beta_2)$, respectively, satisfying the first order boundary conditions 
\begin{equation}\label{boundaryAB}
\begin{aligned} 
&A(\alpha_i)=0=B(\beta_i), \\
&A'(\alpha_1)=r_{\alpha_1}, A'(\alpha_2)=-r_{\alpha_2}, \\
&B'(\beta_1)=r_{\beta_1},  B'(\beta_2)=-r_{\beta_2},
\end{aligned}
\end{equation}
with $r_{\alpha_i}>0,r_{\beta_i}>0$ for $i=1,2$ prescribed by the labelling $\L$.

\item[$\circ$] $g=g_u$ is of \emph{Calabi-type} if $\polytope^0$ admits \emph{Calabi coordinates} $\xi,\eta$ such that on $M^0=\polytope^0\times\G^2$ we have 
\begin{equation}\label{Calabimetric}
g_{|_{M^0}}=\xi\frac{d\xi^2}{A(\xi)}+\xi\frac{d\eta^2}{B(\eta)}+\frac{A(\xi)}{\xi}(dt_1+\eta dt_2)^2+\xi B(\eta)dt_2^2.
\end{equation}
In this case, the momentum coordinates $x=(x_1,x_2)$ are given by $x_1=\xi$, $x_2=\xi\eta$ and we can assume $\textrm{Im }_{\polytope^0}\xi=(\alpha_1,\alpha_2)$  and $\textrm{Im }_{\polytope^0}\eta=(\beta_1,\beta_2)$ with $0<\beta_1<\beta_2<\alpha_1<\alpha_2$, $A\in C^\infty([\alpha_1,\alpha_2])$ and $B\in C^\infty([\beta_1,\beta_2])$ positive on $(\alpha_1,\alpha_2)$ and $(\beta_1,\beta_2)$, respectively, satisfying the first order boundary conditions (\ref{boundaryAB}) at $\alpha_1$, $\alpha_2$ and $\beta_1$, $\beta_2$ (see \cite[Proposition 4.4]{legendre2011toric}).

\item[$\circ$] $g=g_u$ is \emph{orthotoric} if $\polytope^0$ admits \emph{orthotoric coordinates} $\xi,\eta$ such that on $M^0=\polytope^0\times\G^2$ we have  
\begin{multline}\label{orthoricmetric}
g_{|_{M^0}}=\frac{(\xi-\eta)}{A(\xi)}d\xi^2+\frac{(\xi-\eta)}{B(\eta)}d\eta^2\\ +\frac{A(\xi)}{\xi-\eta}(dt_1+\eta dt_2)^2+\frac{B(\eta)}{\xi-\eta}(dt_1+\xi dt_2)^2. \phantom{----------}
\end{multline}
In this case, the momentum coordinates $x=(x_1,x_2)$ are given by $x_1=\xi+\eta$, $x_2=\xi\eta$ and we can assume $\textrm{Im }_{\polytope^0}\xi=(\alpha_1,\alpha_2)$  and $\textrm{Im }_{\polytope^0}\eta=(\beta_1,\beta_2)$ with $0<\beta_1<\beta_2<\alpha_1<\alpha_2$, $A\in C^\infty([\alpha_1,\alpha_2])$ and $B\in C^\infty([\beta_1,\beta_2])$ are positive on $(\alpha_1,\alpha_2)$ and $(\beta_1,\beta_2)$, respectively, satisfying the first order boundary conditions (\ref{boundaryAB}) at $\alpha_1$, $\alpha_2$ and $\beta_1$, $\beta_2$ (see \cite[Proposition 3.1]{legendre2011toric}).
\end{itemize}
We first notice that in \cite{Apostolov2006}, the authors show that for the metrics above to be \emph{extremal}, the functions $A(\xi)$ and $B(\eta)$ must be polynomials of degree $\leq4$ satisfying certain linear relations between their coefficients. We refer to pairs of polynomials $(A(\xi),B(\eta))$ satisfying these relations an \emph{extremal pair} $(A,B)$. \cite[Theorem 1.1]{legendre2011toric} then states that if $(\polytope,\L)$ is an equipoised quadrilateral, one can associate to $(\polytope,\L)$ real numbers $0<\beta_1<\beta_2<\alpha_1<\alpha_2$ and an extremal pair $(A,B)$, verifying the first order boundary conditions (\ref{boundaryAB}), such that they define an extremal K\"ahler metric in $\symppot$, should they be positive on $(\alpha_1,\alpha_2)$ and $(\beta_1,\beta_2)$, respectively. Also, it is shown in \cite[Theorem 1.1]{legendre2011toric} that $(\polytope,\L)$ is $K$-stable if and only if the extremal pair $(A,B)$ is positive on their respective intervals of definition. We now argue that $K$-stability (i.e. positivity of $A$ and $B$) follows automatically from the equipoised condition.

By \cite{legendre2011toric}, if $(\polytope,\L)$ is equipoised, then the solution of the Abreu equation (\ref{abreueq}) (if it exists) must be given by one of the three types described above, according to whether $(\polytope,\L)$ is an equipoised parallelogram, trapezoid which is not a parallelogram, or a quadrilateral which is not a trapezoid, respectively. Furthermore, it is observed \cite{legendre2011toric} that equipoised parallelogram are always $K$-stable and admit extremal K\"ahler metrics of product type. This follows from the boundary conditions (\ref{boundaryAB}) in the product case where an extremal pair $(A,B)$ is defined by the conditions so that $deg A\leq3$ and $deg B\leq3$.

We now consider the Calabi-type case which describes the extremal metrics associated to an equipoised labelled trapezoid which is not a parallelogram. Although the argument does not appear in \cite{legendre2011toric}, the author kindly shared with us in a private communication her observation that any extremal pair $(A,B)$ in this case must also satisfy the positivity assumption (i.e. $(\polytope,\L)$ is $K$-stable if it is an equipoised trapezoid which is not a parallelogram). This follows from the following observation: according to \cite[Proposition 4.6]{legendre2011toric}, a metric of Calabi-type (\ref{Calabimetric}) is extremal if and only if ${A(\xi)=\sum_{i=0}^4a_i\xi^{4-i}}$ has degree at most 4, $B(\eta)$ has degree 2 and $$B''(\eta)=-2a_2=-A''(0).$$ We notice that the boundary conditions (\ref{boundaryAB}) impose that $B(\eta)$ is positive on $(\beta_0,\beta_1)$ which in turn yields $A''(0)=2a_2>0$. If we suppose that $A$ is not positive in $(\alpha_1,\alpha_2)$ this would imply that the two roots of $A''(\xi)$ belong to the interval $(\alpha_1,\alpha_2)$ due to the boundary conditions (\ref{boundaryAB}). However, since $0<\alpha_1<\alpha_2$, for $A''(0)$ to be positive $A''(\xi)$ would have to admit a third root in the interval $(0,\alpha_1)$ which is not possible since $deg A''=2$. Then we conclude that $A(\xi)$ must be positive on $(\alpha_1,\alpha_2)$.

The $K$-stability of an equipoised labelled quadrilateral which is \emph{neither} a parallelogram \emph{nor} a trapezoid was later observed in \cite[Example 1]{Apostolov2015}. This follows from the fact that in this case, $(A,B)$ is an extremal pair if and only if $deg(A+B)\leq1$ \cite[Proposition 3]{Apostolov2015}. Then, between any maximum of $A$ on $(\alpha_1,\alpha_2)$ and of $B$ on $(\beta_1,\beta_2)$, the quadratic $A''=-B''$ has a unique root; the boundary conditions thus force again $A$ and $B$ to be positive on $(\alpha_1,\alpha_2)$ and $(\beta_1,\beta_2)$, respectively.
\end{proof}

\begin{proof}[Proof of Theorem \ref{thm2}]Thus, by virtue of Theorem \ref{foclassfk}, Theorem \ref{equivthm}, Corollary \ref{keycorollary} and Theorem \ref{equipoisedresult} (in that order), the existence part of Theorem \ref{thm2} will be established if we check that the $f$-twist of $(\polytopepk,\L_{p,k})$ with $f$ being the affine linear function given by one of the families (\ref{solution1fk}), (\ref{solution2fk}) or (\ref{solutions3et4fk}) of Theorem \ref{foclassfk} is equipoised. The verification is straightforward in all cases, so we present below only the case (\ref{solutions3et4fk}) (in which the validity of the condition (b) was previously unknown).

We first notice that as $\fextpmk=c$ by Theorem \ref{foclassfk}, we have $\exttk=\frac{c}{\fpmpk}$ by Proposition \ref{keyremark}. It follows that $(\tpolytopepk,\tL)$ is equipoised if and only if
\begin{equation}\label{equipoisedcond}
\sum_{i=1}^4(-1)^i\frac{1}{\fpmpk(s_i)}=0.
\end{equation}

The verification of (\ref{equipoisedcond}) is straightforward and we detail below the computation in the case $k=1$ (for other values of $k$ the computation is similar), and we drop the index $k$ to ease the notation.

\begin{multline} \label{e1}
\sum_{i=1}^{4}(-1)^i\frac{1}{f^\pm_p(s_i)}= 
\frac{1}{p^2+2p-2+\sqrt{F(p)}}+\frac{1}{-p^3+3p^2-4p+2-(1-p)\sqrt{F(p)}}+\\ \frac{1}{p^3-3p^2+4p-2-(1-p)\sqrt{F(p)}}+\frac{1}{-p^2-2p+2+\sqrt{F(p)}}
\end{multline}

If we write $U=p^2+2p-2$, $W=\sqrt{F(p)}$ and $V=p^3-3p^2+4p-2$, the RHS of (\ref{e1}) is given by

\begin{multline}\label{e2}
\frac{1}{U+W}+\frac{1}{-V-(1-p)W}+ \frac{1}{V-(1-p)W}+\frac{1}{-U+W}=\\ 
 \frac{(-V-(1-p)W)(V-(1-p)W)(-U+W)+(U+W)(V-(1-p)W)(-U+W)}{(U+W)(-V-(1-p)W)(V-(1-p)W)(-U+W)}+\\ \frac{(U+W)(-V-(1-p)W)(-U+W)+(U+W)(-V-(1-p)W)(V-(1-p)W)}{(U+W)(-V-(1-p)W)(V-(1-p)W)(-U+W)}=\\
 \frac{-2W\left[V^2+(1-p)(pW^2-U^2)\right]}{(U+W)(-V-(1-p)W)(V-(1-p)W)(-U+W)}
\end{multline}

Now replacing $U,$ $V$, $W$, and $F(x)=x^4-4x^3+16x^2-16x+4$, we can check that $V^2+(1-p)(pW^2-U^2)=0$ in (\ref{e2}). We have performed similar verifications for any $k$.

For the last claim of Theorem~\ref{thm2} see the proof of \cite[Theorem 5]{Apostolov2019conformally} and \cite[Sec. 5.4]{Apostolov2015}.
\end{proof}

\subsection{Proof of Theorem \ref{thm1}}

The first part of Theorem~\ref{thm1} follows directly from Theorem~\ref{thm2}, since the positive affine linear functions giving rise to new cKEM metrics are obtained by taking $k=1$ in (\ref{solutions3et4fk}).

We will thus establish below the uniqueness statement in Theorem \ref{thm1}. To this end, we use  the fact that any K\"ahler metric $(g, \omega)$ on $\F_1$, which is conformal to an Einstein-Maxwell metric $\tg = \frac{1}{f^2} g$,   is invariant under the action of a maximal torus in the automorphism group $Aut(\F_1)$ of $\F_1$ \cite{futaki2020existence, lahdili2019kahler}. As any two maximal tori are conjugated by an element of $Aut(\F_1)$, by acting with such an element on $(g, \omega)$ we can assume that $(g, \omega)$ is invariant under a fixed torus $\G^2 \subset Aut(\F_1)$, and by acting with a homothety on $(g, \omega)$, that the momentum map of $\G^2$ with respect to $\omega$ is a Delzant polytope $\polytope_{p,1}$  (for some $p\in (0,1)$) as defined in Section \ref{knownres}. Now by \cite[Theorems 3 and 5]{Apostolov2019conformally}, the isometry classes of $\G^2$-invariant conformally Einstein--Maxwell K\"ahler metrics  $(g, \omega)$ in the cohomology class determined by  $\polytope_{p,1}$ are in a bijective correspondence with the positive affine linear functions $f$ on $\polytope_{p,1}$, normalized by the condition that sum of $f$ over the vertices of $\polytope_{p,1}$ equals to $1$,  for which the conditions $(a)$ and $(b)$ in (\ref{abconditions}) hold true.

In \cite{futaki2018volume}, the determination of (normalized) solutions $f$ to the  condition (a) of (\ref{abconditions}) is studied in detail. Additionally to the solutions listed in Theorem \ref{foclassfk} above, the authors found in \cite{futaki2018volume} two explicit families of normalized affine linear functions $\fpmb(x_1,x_2)$ which would verify the condition (a) of (\ref{abconditions}) should they be positive on $\polytope_{p,1}$. But this last point was left open, see \cite[p. 26]{futaki2018volume}

\begin{proposition}[\cite{futaki2018volume}] \label{focases1and2} Let $M=\F_1$ be the first Hirzebruch surface classified by the Delzant polytope $(\polytope_{p,1}, \L_{p,1})$, $p\in(0,1)$, introduced in Section \ref{knownres}. Letting 
$$E_b(p)=b^2(1-p)(2-3p)^2+p^2+p-1,$$ 
consider the two affine linear functions 
$$f^\pm_b:=a^\pm_bx_1+bx_2+c^\pm_b,$$ where
\begin{displaymath}
a^\pm_b=\frac{3bp^2+(1-2b)p\pm\sqrt{E_b(p)}}{2p(3p-2)},
\end{displaymath} 
\begin{displaymath}
c^\pm_b=\frac{1}{4}\left( 1-(2-p)b-2pa^\pm_b\right),
\end{displaymath}
which are defined for any value $b\in\mathbb{R}$ such that 
\begin{equation}\label{ineq1}
b^2\geq\frac{1-p-p^2}{(1-p)(2-3p)^2}
\end{equation}
for a fixed $p\in(0,1)$, $p\neq\frac{2}{3}$. If $\fpmb$ is positive on $\polytope_{p,1}$, then it satisfies condition $(a)$ in (\ref{abconditions}).
\end{proposition}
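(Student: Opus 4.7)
Following \cite{futaki2018volume}, I would characterize condition $(a)$ of (\ref{abconditions}) for an affine linear $f = a x_1 + b x_2 + c$ via Definition~\ref{definvfutaki} with $w = 2m = 4$: this condition is the requirement that there exist a constant $c_0$ with
\begin{equation*}
2 \int_{\partial \polytope_{p,1}} \phi\, f^{-3}\, d\sigma \;=\; c_0 \int_{\polytope_{p,1}} \phi\, f^{-5}\, dx \qquad \text{for every affine linear } \phi.
\end{equation*}
Testing against $\phi = 1$ simply forces $c_0 = \ckemconstant$ (the formula (\ref{ckemconstant})), while the tests against $\phi = x_1$ and $\phi = x_2$ provide two additional conditions on the triple $(a,b,c)$. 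Since these conditions are homogeneous of degree $-2$ under the rescaling $f \mapsto \lambda f$, it is natural to impose the normalization $\sum_{i=1}^4 f(s_i)=1$ used in the statement, where $s_1,\ldots, s_4$ are the vertices of $\polytope_{p,1}$.

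The four vertex values of $f$ are $c,\; ap+c,\; ap + b(1-p) + c,\; b+c$, so the normalization reads $4c + 2pa + (2-p)b = 1$, giving at once $c = \tfrac{1}{4}(1 - (2-p)b - 2pa)$, which is the announced formula for $c^\pm_b$ (with $a = a^\pm_b$). I would then compute the bulk and boundary integrals explicitly: these are integrals of $\phi/f^k$ ($k=3,5$, $\phi \in\{1,x_1,x_2\}$) over the trapezoid and its four edges, and admit closed-form rational expressions in the vertex values $f(s_i)$ and in $(a,b,c,p)$ through one-dimensional antidifferentiation (or, more uniformly, through the Brion-type residue formula). Substituting these into the two remaining equations (from $\phi=x_1$ and $\phi=x_2$) and using the normalization, I expect the two equations to reduce to a \emph{single} polynomial relation---the second being a consequence of the first once the scaling freedom has been used up by the normalization---and this polynomial to be quadratic in $a$ with leading coefficient $2p(3p-2)$ and discriminant proportional to
\begin{equation*}
E_b(p) \;=\; b^2(1-p)(2-3p)^2 + p^2 + p - 1.
\end{equation*}
Solving the quadratic (for $p \neq 2/3$) yields the two roots $a^\pm_b = \bigl(3bp^2 + (1-2b)p \pm \sqrt{E_b(p)}\bigr)\big/\bigl(2p(3p-2)\bigr)$, whence $c^\pm_b$ by the normalization and $f^\pm_b$ as stated. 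Real solutions exist precisely when $E_b(p) \geq 0$, which is exactly the inequality (\ref{ineq1}); the positivity of $f^\pm_b$ on $\polytope_{p,1}$ is a separate hypothesis that, when satisfied, places $f^\pm_b$ in $\mathcal{P}^\G_\Omega$ and thus turns the algebraic solution into an honest solution of $(a)$.

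The main obstacle is not the integrations themselves---they are mechanical, and analogous calculations are performed in \cite[Sections 3--4]{futaki2018volume} for the specific one-parameter subfamilies of Theorem~\ref{foclassfk}---but the algebraic bookkeeping that shows the $\phi=x_1$ and $\phi=x_2$ equations become proportional after eliminating $c$ via the normalization, and the identification of the resulting single relation as a quadratic in $a$ with the announced discriminant. Keeping $b$ as a free parameter (rather than fixing it as in \cite{futaki2018volume}) is the only genuine novelty, and the anticipated simplification that identifies $E_b(p)$ is the one step where the trapezoidal geometry of $\polytope_{p,1}$ (a rational function of $p$ alone) enters in an essential way.
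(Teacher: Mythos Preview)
The paper does not supply its own proof of this proposition: it is quoted from \cite{futaki2018volume}, where the affine linear functions satisfying condition $(a)$ are obtained as critical points of a normalized volume functional on $\mathcal{P}^\G_\Omega$ (cf.\ the Remark following Theorem~\ref{futvanishing}), not by direct evaluation of the Donaldson--Futaki integrals. There is therefore no in-paper argument to compare your proposal against, and your integral approach is already a different route from the one in the cited source.

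Your derivation of $c^\pm_b$ from the vertex-sum normalization is correct. The problematic step is the claim that the two remaining equations (from $\phi=x_1$ and $\phi=x_2$) collapse to a \emph{single} quadratic in $a$ ``once the scaling freedom has been used up by the normalization.'' This reasoning is a miscount: the requirement that $\zeta_{(\polytope,\L,f)}$ be constant is itself invariant under $f\mapsto\lambda f$ (indeed $\zeta_{(\polytope,\L,\lambda f)}=\lambda^{2}\zeta_{(\polytope,\L,f)}$), so the normalization merely fixes the scale and cannot make one of the two linear-coefficient conditions on $\zeta$ a consequence of the other. After normalization you still face two equations in the two unknowns $(a,b)$, which generically yield isolated solutions, not a one-parameter family in $b$. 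That the family $f^\pm_b$ nevertheless solves condition $(a)$ for every admissible $b$---if that is what the proposition asserts---is a nontrivial algebraic feature of the trapezoid $\polytope_{p,1}$; it is precisely what must be \emph{demonstrated}, and your scaling heuristic does not supply it. A direct proof along your lines would require carrying out both the $\phi=x_1$ and $\phi=x_2$ computations in full and exhibiting the claimed dependency explicitly, or else reverting to the volume-functional formulation of \cite{futaki2018volume} where the relevant factorization is made manifest.
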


Thus, in order to obtain the uniqueness statement of Theorem \ref{thm1}, it is enough to show that the affine linear functions $\fpmb$ are not positive on $\polytope_{p,1}$.

\begin{lemma} \label{cases12ruledout}
The affine linear functions $\fpmb$ defined in Proposition \ref{focases1and2} are not positive on $\polytope_{p,1}$.
\end{lemma}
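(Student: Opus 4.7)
My plan is as follows.  Since $f^\pm_b$ is affine linear on the convex polytope $\polytope_{p,1}$, its positivity on $\polytope_{p,1}$ is equivalent to its positivity at each of the four vertices $v_1 = (0,0)$, $v_2 = (p,0)$, $v_3 = (p, 1-p)$ and $v_4 = (0, 1)$.  A direct computation using the normalization $c^\pm_b = \frac{1}{4}(1 - (2-p)b - 2pa^\pm_b)$ from Proposition~\ref{focases1and2} yields
\begin{align*}
4f^\pm_b(v_1) &= 1 - (2-p)b - 2pa^\pm_b, & 4f^\pm_b(v_2) &= 1 - (2-p)b + 2pa^\pm_b, \\
4f^\pm_b(v_3) &= 1 + (2-3p)b + 2pa^\pm_b, & 4f^\pm_b(v_4) &= 1 + (2+p)b - 2pa^\pm_b,
\end{align*}
which sum to $4$, consistent with the normalization $\sum_i f^\pm_b(v_i) = 1$.

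Next I would introduce the auxiliary quantity $\delta^{\pm} := 2p a^\pm_b - pb$; the explicit formula for $a^\pm_b$ in Proposition~\ref{focases1and2} then yields the clean expression $\delta^{\pm} = (p \pm \sqrt{E_b(p)})/(3p-2)$, and the vertex values become
\[
4f^\pm_b(v_1) = 1 - 2b - \delta^{\pm}, \quad 4f^\pm_b(v_4) = 1 + 2b - \delta^{\pm},
\]
\[
4f^\pm_b(v_2) = 1 - 2(1-p)b + \delta^{\pm}, \quad 4f^\pm_b(v_3) = 1 + 2(1-p)b + \delta^{\pm}.
\]
Positivity of all four values is therefore equivalent to the single interval constraint
\[
2(1-p)|b| - 1 < \delta^{\pm} < 1 - 2|b|,
\]
which in particular forces $|b|(2-p) < 1$ for the interval to be nonempty.

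The proof then proceeds by case analysis on the sign of $3p-2$ and the branch $\pm$.  For $p > 2/3$ on the ``$+$'' branch one has $\delta^{+} \geq p/(3p-2) > 1$ since $p < 1$, immediately violating the upper bound $\delta^+ < 1 - 2|b| \leq 1$; a symmetric, equally short argument handles the ``$-$'' branch when $p < 2/3$.  In the two remaining cases both bounds are nontrivial, so one isolates $\sqrt{E_b(p)}$ from each bound, checks that the corresponding right-hand sides can be made positive on the admissible region, and squares to obtain two polynomial inequalities in $(p,|b|)$, each quadratic in $|b|$.  Combining these—for example by elimination, by weighted addition, or through the Vieta relations $\delta^+ + \delta^- = 2p/(3p-2)$ and $\delta^+\delta^- = (1-p)\bigl[(3p-2)^{-2} - b^2\bigr]$ which let one treat both branches simultaneously—I would eliminate $|b|$ and reduce to a polynomial inequality in $p$ alone incompatible with the admissibility constraint $b^2 \geq \frac{1-p-p^2}{(1-p)(2-3p)^2}$.

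The main obstacle, consistent with the character of the preceding verifications in the paper (``straightforward in all cases''), is the algebraic bookkeeping rather than any conceptual subtlety.  The polynomial inequality in $p$ that ultimately rules out positivity should admit a clean factorization, likely involving $E_b(p)$ itself or an auxiliary polynomial such as $3p^2 - 9p + 5$ (which arises naturally when squaring the upper bound for the ``$-$'' branch in the regime $p > 2/3$); locating the correct factorization on each of the subintervals carved out by the roots $2/3$, $(-1+\sqrt{5})/2$ and the boundary of admissibility is where the computational effort would concentrate.
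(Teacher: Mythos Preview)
Your setup is cleaner than the paper's: introducing $\delta^\pm = (p\pm\sqrt{E_b(p)})/(3p-2)$ and rewriting positivity as the single interval constraint $2(1-p)|b|-1<\delta^\pm<1-2|b|$ is a genuine simplification over the four separate inequalities the paper manipulates. From that point on, however, your strategy and the paper's coincide: case analysis on the sign of $3p-2$ and on the branch, isolating $\sqrt{E_b(p)}$, squaring, and confronting the result with the admissibility bound $b^2\geq (1-p-p^2)/\bigl((1-p)(2-3p)^2\bigr)$. The key polynomial obstruction the paper reaches is $p(p^2-4p+2)<0$, which is impossible on $(0,\tfrac12)$, together with a factorization $(1-p)(2-3p)(bp+1)\bigl(b(2-3p)-1\bigr)$ that disposes of the remaining subintervals.

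There is one concrete error in your case accounting. The claim that ``a symmetric, equally short argument handles the `$-$' branch when $p<2/3$'' is false: for $p<2/3$ one has $\delta^- = (\sqrt{E_b(p)}-p)/(2-3p)$, which carries no a priori bound analogous to $\delta^+\geq p/(3p-2)>1$ in the case $p>2/3$. There is no symmetry of the problem exchanging $(+,p>2/3)$ with $(-,p<2/3)$. In the paper this is in fact the \emph{longest} case, requiring a further subdivision into $p\in(0,\tfrac12)$ (where the admissibility bound is used to reach $p(p^2-4p+2)<0$) and $p\in(\tfrac12,\tfrac23)$ (where the factorization above forces $b>1/(2-3p)$, contradicting the upper bound on $b$). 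So your ``two remaining cases'' are really three, and the one you dismissed is the one needing the most work. Your overall plan still goes through once this is corrected.
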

\begin{proof}[Proof of Lemma \ref{cases12ruledout}]

We know that an affine linear function is positive over a convex polytope if $f(s)>0$  for every vertex $s$ of the polytope. Let $s_1=(0,0)$, $s_2=(0,1)$, $s_3=(p,0)$ and $s_4=(p,1-p)$ be the vertices of $\polytope_{p,1}$ with $p\in(0,1)$. 

We compute that $\fpmb$ is positive on $\polytope_{p,1}$ if and only if:
\begin{equation} \label{fsi}
\begin{aligned}
\fpmb(s_1)&=\frac{-(1-p)+(2-3p)b\mp\sqrt{E_b(p)}}{2(3p-2)}>0\\
\fpmb(s_2)&=\frac{-(1-p)-(2-3p)b\mp\sqrt{E_b(p)}}{2(3p-2)}>0\\
\fpmb(s_3)&=\frac{(1-p)(2-3p)b+(2p-1)\pm\sqrt{E_b(p)}}{2(3p-2)}>0\\
\fpmb(s_4)&=\frac{-(1-p)(2-3p)b+(2p-1)\pm\sqrt{E_b(p)}}{2(3p-2)}>0\\
\end{aligned}
\end{equation}
and we observe that $\fpmb(s_1)=\fpmbm(s_2)$ and $\fpmb(s_3)=\fpmbm(s_4)$. Because of this symmetry, from now on we consider only $b\geq0$. To ease the notation, we drop the index $b$ and study the positivity of $f^+$ and $f^-$ separately. 

\textbf{Case 1:} Positivity of $f^+$

We first observe that if $p\in\left(\frac{2}{3},1\right)$ then $f^+(s_2)<0$, so we need only to consider $p\in(0,\frac{2}{3})$. For $p\in\left(0,\frac{2}{3}\right)$ and $b\geq0$, the conditions (\ref{fsi}) imply that 
\begin{equation}\label{ineq2}
-(1-p)+(2-3p)b<\sqrt{E_b(p)}<-(1-p)(2-3p)b-(2p-1)
\end{equation}
and the RHS of (\ref{ineq2}) forces
\begin{equation} \label{ineq3}
0\leq b<\frac{(1-2p)}{(1-p)(2-3p)}.
\end{equation}
 Under this assumption the LHS of (\ref{ineq2}) is always negative since $$\frac{(1-2p)}{(1-p)(2-3p)}<\frac{(1-p)}{(2-3p)}.$$
 
Thus the positivity of $f^+$ is equivalent to $p\in\left(0,\frac{1}{2}\right)$, $|b|<\frac{(1-2p)}{(1-p)(2-3p)}$ and 
\begin{equation}\label{ineq4}
E_b(p)<((1-2p)-(1-p)(2-3p)b)^2.
\end{equation}

Now, (\ref{ineq1}) and (\ref{ineq3}) give 
\begin{equation}\label{ineq4.1}
\frac{1-p-p^2}{(2-3p)^2(1-p)}\leq b^2<\frac{(1-2p)^2}{\left[(1-p)(2-3p)\right]^2},
\end{equation}
which leads to
\begin{equation} \label{ineq5}
(1-p)(1-p-p^2)<(1-2p)^2,
\end{equation}
so we must have $p(p^2-4p+2)<0$. However, for $p\in\left(0,\frac{1}{2}\right)$ we have ${p^2-4p+2>0}$, showing that $f^+$ cannot be everywhere positive on $\polytope_{p,1}$.

\textbf{Case 2:} Positivity of $f^-$

Once again we assume without loss of generality $b\geq0$ and consider the two cases $p\in\left(0,\frac{2}{3}\right)$ and $p\in\left(\frac{2}{3},1\right)$.

We consider first the case $p\in\left(0,\frac{2}{3}\right)$. Here, (\ref{fsi}) for $f^-$ reduces to (\ref{ineq1}),
\begin{equation}
b^2\geq\frac{1-p-p^2}{(1-p)(2-3p)^2}
\end{equation}
 and 
\begin{equation}\label{ineq6}
(2p-1)+(1-p)(2-3p)b<\sqrt{E_b(p)}<(1-p)-(2-3p)b.
\end{equation}
For the RHS of (\ref{ineq6}) to be positive we need 
\begin{equation}\label{ineq7}
b<\frac{(1-p)}{(2-3p)}.
\end{equation}
Notice that the LHS of (\ref{ineq6}) is positive if $p\in(\frac{1}{2},\frac{2}{3})$. In this case, 
\begin{equation}
E_b(p)>((2p-1)+(1-p)(2-3p)b)^2.
\end{equation}
which is equivalent to 
\begin{equation}\label{ineq9}
0<E_b(p)-((2p-1)+(1-p)(2-3p)b)^2=(1-p)(2-3p)(bp+1)(b(2-3p)-1),
\end{equation}
and for the RHS of (\ref{ineq9}) to be positive we need $b>\frac{1}{(2-3p)}$ which contradicts the inequality (\ref{ineq7}).

On the other hand, if $p\in\left(0,\frac{1}{2}\right)$, we compare inequalities (\ref{ineq7}) and (\ref{ineq1}) and, as in the case of $f^+$ we obtain the inequality (\ref{ineq4.1}). This leads to ${p(p^2-4p+2)<0}$ and this is impossible for $p\in\left(0,\frac{1}{2}\right)$. So we conclude that $f^-$ is negative somewhere on $\polytope_{p,1}$ whenever $p\in\left(0,\frac{2}{3}\right).$

Now, we move to the study of $f^-$ for $p\in\left(\frac{2}{3},1\right)$. Here, conditions (\ref{fsi}) imply
\begin{equation}\label{ineq10}
(1-p)+(3p-2)b<\sqrt{E_b(p)}<(2p-1)-(1-p)(3p-2)b.
\end{equation}
The RHS of (\ref{ineq10}) gives the necessary condition $b\in\left[0,\frac{(2p-1)}{(1-p)(3p-2)}\right)$ and, under these restrictions, both sides of (\ref{ineq10}) are positive. Thus, (\ref{ineq10}) is equivalent to the inequalities
\begin{equation}\label{ineq11}
E_b(p)<((2p-1)-(1-p)(3p-2)b)^2
\end{equation}
and
\begin{equation}\label{ineq12}
E_b(p)>((1-p)+(3p-2)b)^2.
\end{equation}
The condition (\ref{ineq11}) is similar to the condition (\ref{ineq4}) for $f^+$ and we obtain, from the factorization presented there, that 
\begin{equation}\label{ineq13}
(1-p)(2-3p)(bp+1)(b(2-3p)-1)<0.
\end{equation}
So (\ref{ineq13}) gives $b(2-3p)-1>0$, which is impossible since $(2-3p)<0$ and $b\geq0$. Then $f^-$ needs to be negative somewhere on $\polytope_{p,1}$ whenever $p\in\left(\frac{2}{3},1\right)$.
\end{proof}

\begin{remark} While \emph{all} possible positive affine linear functions on $\polytope_{p,1}$ satisfying the condition $(a)$ in (\ref{abconditions}) are determined by virtue of \cite{futaki2018volume} and Lemma \ref{cases12ruledout} above, the question remains open for $k\geq2$. As a matter of fact, it is still unknown whether or not for $k=2,3,4$ the affine linear functions given by (\ref{solution1fk}), (\ref{solution2fk}) and (\ref{solutions3et4fk}) are the \emph{only} such functions whereas for $k\geq5$ it is unknown even wheter or not the affine linear function defined in (\ref{solutions3et4fk}) is actually positive over $\polytope_{k,p}$.
\end{remark}

\bibliography{BibitexIsaque.bib}
\bibliographystyle{acm}

\end{document}